\documentclass[11pt,reqno]{amsart}
\usepackage{graphicx,amsmath,amssymb,amsfonts,amsthm,enumitem,bm,xcolor}
\usepackage{a4wide,fullpage}
\usepackage{cleveref}
\usepackage{float}
\usepackage[normalem]{ulem}

\usepackage{url}

\usepackage{color}

\usepackage{xurl}

\newcommand{\N}{\mathbb{N}}
\newcommand{\R}{\mathbb{R}}

\renewcommand{\a}{\alpha}
\renewcommand{\b}{\beta}

\renewcommand{\d}{{\delta}}
\newcommand{\g}{\gamma}
\newcommand{\G}{\Gamma}
\renewcommand{\l}{\lambda}

\renewcommand{\k}{\kappa}

\renewcommand{\t}{\tau}

\newcommand{\z}{\zeta}

\renewcommand{\(}{\left\(}
\renewcommand{\)}{\right\)}
\renewcommand{\i}{\infty}

\newcommand{\pa}[2]{\left(\frac{#1}{#2}\right)}

\numberwithin{equation}{section}
\theoremstyle{plain}
\newtheorem{theorem}{Theorem}[section]
\newtheorem{lemma}[theorem]{Lemma}

\newtheorem{corollary}[theorem]{Corollary}
\newtheorem{proposition}[theorem]{Proposition}

\theoremstyle{definition}

\numberwithin{equation}{section}

\crefname{subsection}{subsection}{Subsection}

\renewcommand{\pmod}[1]{\ \left( \mathrm{mod} \, #1 \right)}

\setlist[enumerate]{leftmargin=*,label=\rm{(\arabic*)}}

\makeatletter
\@namedef{subjclassname@2020}{%
	\textup{2020} Mathematics Subject Classification}
\makeatother

\allowdisplaybreaks

\title{On sign patterns of Fourier coefficients of theta quotients}
\author{Koustav Banerjee}
\author{Kathrin Bringmann}
\author{Bernhard Heim}
\address{University of Cologne, Department of Mathematics and Computer Science, Weyertal 86-90, 50931 Cologne, Germany}
\email{kbanerj1@uni-koeln.de}
\email{kbringma@math.uni-koeln.de}
\email{bheim@uni-koeln.de}
\author{Ben Kane}
\address{The University of Hong Kong, Department of Mathematics, Pokfulam, Hong Kong}
\email{bkane@hku.hk}

\makeatletter
\@namedef{subjclassname@2020}{%
	\textup{2020} Mathematics Subject Classification}
\makeatother

\subjclass[2020]{{\hspace{-0.08 cm}11F03, 11F11, 11F20, 11F27, 11F30, 11F37.}}
\keywords{{asymptotics, modular forms, sign patterns, theta quotients.}}

\begin{document}
\begin{abstract}
 Motivated by conjectures of Schlosser and Zhou on sign patterns of Fourier coefficients of infinite products, we study two families of theta quotients and their positive integral powers. We determine asymptotic formulas for their Fourier coefficients and use them to obtain explicit eventual periodic sign patterns. Our results provide new instances of the sign phenomena observed by Schlosser and Zhou and illustrate how dominant cusps determine the resulting sign patterns. These results were formalized and verified in Lean by AxiomProver.
\end{abstract}
\maketitle

\section{Introduction and statement of results}

In this paper, for $m\in \mathbb{N}_{\ge 3}$ and $\ell\in \mathbb{Z}\setminus\{0\}$, we study the sign pattern of the Fourier coefficients for the quotient
\begin{equation}\label{prod}
\frac{\left(q,q^{m-1};q^m\right)^{\ell}_{\infty}}{\left(q^2,q^{m-2};q^m\right)^{\ell}_{\infty}}.
\end{equation}
Here and throughout, $|q|<1$ and for $a\in\mathbb{C}$ and $n\in\mathbb{N}_0\cup \{\infty\}$, we set $(a)_n\hspace{-0.1 cm}=\hspace{-0.1 cm}(a;q)_n\hspace{-0.1 cm}:=\prod_{j=0}^{n-1}(1-aq^j)$. For $r\in \mathbb{N}$ and $a_1,\dots,a_r\in \mathbb{C}$, we let $(a_1,\dots,a_r)_n:=(a_1,\dots,a_r;q)_n:=(a_1)_n\dots (a_r)_n$.  The primary motivation for considering this problem goes back to the sign pattern of Fourier coefficients of the Rogers--Ramanujan continued fraction 
\[R(q):=\frac{q^{\frac15}}{1+\dfrac{q}{1+\dfrac{q^{2}}{1+\dfrac{q^{3}}{1+\cdots}}}}=q^{\frac15}\frac{(q,q^{4};q^{5})_{\infty}}{(q^{2},q^{3};q^{5})_{\infty}}.\]
In this case, using the Circle method, the sign pattern of the $n$-th Fourier coefficient for $n$ sufficiently large 
\rm
was proved by Richmond and Szekeres \cite[(3.9)]{RichmondSzekeres1978}. Andrews \cite[Corollary 1]{Andrews1981} then gave a proof
\rm
for the sign pattern of all coefficients
\rm
via $q$-series dissections. Later, using dissections of Ramanujan's theta functions, Ramanathan \cite[Theorem 5 and Theorem 6]{Ramanathan} and Chan--Yesilyurt \cite[Theorem 1.2 and Theorem 1.3]{CY} studied periodicity of the sign of Fourier coefficients for a family of infinite products resembling
\rm
\eqref{prod}. Recently, using similar techniques,
\rm
Baruah and Sarma \cite[Theorem 2 and Theorem 3]{BS} investigated the sign pattern of the coefficients of \eqref{prod} with $m=5$ and $\ell=\pm 5$. Using the Circle method,  Schlosser and Zhou \cite{SchlosserZhou2023} studied the sign pattern of Fourier coefficients of theta quotients similar to \eqref{prod} and made several conjectures. Three of the authors \cite[Theorem 1.2]{BHK} settled one of the conjectures of Schlosser and Zhou \cite[Conjecture 22]{SchlosserZhou2023}. Other results in this direction can be found in \cite[Section 2]{Chern} and \cite{HeLi2025}.

Returning to \eqref{prod},
\rm
for $m\in \mathbb{N}_{\ge 3}$ and $\ell\in \mathbb{N}$ we write
\rm
\begin{equation}\label{Eqn0}
F^{\ell}_m(q):=\frac{\left(q,q^{m-1};q^m\right)^{\ell}_{\infty}}{\left(q^2,q^{m-2};q^m\right)^{\ell}_{\infty}}=:\sum_{n\ge 0}c^{[\ell]}_m(n)q^n
\end{equation}
and study the asymptotic growth and sign pattern of $c^{[\ell]}_m(n)$ as $n\to\infty$. 

We first consider the case $m$ odd. For $m=3$, we trivially have $F_m^{\ell}(q)=1$ and hence $c_{m}^{[\ell]}(n)=0$ for all $n\in\N$. For $m\in \mathbb{N}_{>3}$ odd, define
\begin{equation}\label{phase}
\phi_m:=\frac 12\left(1-\frac 2m\right)\left(1-\frac 3m\right).
\end{equation}
Our first result is the following: 
\begin{theorem}\label{A}
	Let $m\in \mathbb{N}_{>3}$ be odd and $\ell\in \mathbb{N}$. Then, as $n\to \infty$,
	\begin{align*}
	c^{[\ell]}_m(n)
	&\sim
	\left(
	\frac{\left(1-\frac{1}{m}\right)\left(1-\frac{3}{m}\right)\ell}{2m}
	\right)^{\frac14}
	\frac{
		(-1)^n
		\cos\left(\pi\left(\phi_m\ell-\frac{n}{m}\right)\right)
		e^{\pi\sqrt{\frac{2\left(1-\frac{1}{m}\right)\left(1-\frac{3}{m}\right)\ell}{m}n}}
	}{
		n^{\frac34}
	}.
	\end{align*}
	In particular, for $n\gg1$,
	\[
	\textnormal{sgn}\left(c^{[\ell]}_m(n)\right)= (-1)^n\textnormal{sgn}\left(\cos\left(\pi\left(\phi_m\ell-\frac nm\right)\right)\right).
	\]
\end{theorem}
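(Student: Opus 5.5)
We would prove Theorem~\ref{A} with the circle method, in Wright's variant for functions that are not modular on their own. The main input is the modular transformation law of the generalized Dedekind eta functions attached to the factors of \eqref{Eqn0}. By the Jacobi triple product, $(q^a,q^{m-a};q^m)_\infty(q^m;q^m)_\infty$ is a theta function. Hence the common factor $(q^m;q^m)_\infty$ cancels, and $F_m^{\ell}(q)$ is the $\ell$-th power of a quotient of two such theta functions (equivalently, of Klein forms / generalized eta quotients of level $m$).

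First, for every root of unity $\zeta=e^{2\pi i h/k}$ with $\gcd(h,k)=1$, we determine the behaviour of $F_m^{\ell}(\zeta e^{-z})$ as $z\to 0$ with $\mathrm{Re}(z)>0$. We do this either from the eta-quotient transformation formula or, more elementarily, by expanding $\log(q^a,q^{m-a};q^m)_\infty$ and applying Euler--Maclaurin summation along residue classes. Either way we expect
\[
F_m^{\ell}(\zeta e^{-z})=\omega_{h,k}^{\ell}\,\lambda_{h,k}^{\ell}\,e^{\frac{\pi^2\ell\,\kappa_{h,k}}{z}}\bigl(1+O(|z|)\bigr),
\]
with $\kappa_{h,k}$ an explicit combination of values of $B_2(x)=x^2-x+\frac16$ at points $\{ha/\gcd(k,m)\}$-type fractions. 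The sign of $\kappa_{h,k}$ decides whether the point is a major arc.

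Second, we maximize $\kappa_{h,k}$. The factor $(-1)^n\cos(\pi n/m)$ in the claimed formula predicts that the dominant cusps are
\[
\frac hk=\frac{m\pm1}{2m}.
\]
Indeed $(-1)^n e^{\mp\pi i n/m}=e^{-2\pi i n(m\pm1)/(2m)}$, and since $m$ is odd these fractions are reduced with $k=2m$. At these cusps we check that $\kappa$ equals
\[
\frac{(1-\frac1m)(1-\frac3m)}{2m}\cdot\frac1{k^2}\cdot(\text{normalization}),
\]
which matches the exponent $\pi\sqrt{2(1-\frac1m)(1-\frac3m)\ell n/m}$. We then compute the constant $\omega_{h,k}$: it is the phase $e^{\pm\pi i\phi_m}$ produced by the $B_2$/Dedekind-sum terms, so $\omega^\ell$ contributes $e^{\pm\pi i\phi_m\ell}$.

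Third, we run the circle method with Farey dissection of order $\asymp\sqrt n$. On the two major arcs the standard integral produces a Bessel function $I_{1/2}$, or equivalently a saddle point, giving the $n^{-3/4}$ factor and the fourth-root constant. The two arcs are complex conjugate, so their sum is
\[
2\,\mathrm{Re}\!\left(e^{\pi i(\phi_m\ell-n/m)}\right)(-1)^n=2(-1)^n\cos\!\left(\pi\left(\phi_m\ell-\frac nm\right)\right),
\]
times the common amplitude. All other arcs contribute exponentially less. The sign statement follows for those $n$ where the cosine is nonzero; since $\phi_m\ell-n/m$ has bounded denominator, the cosine is then bounded away from $0$ and dominates the error.

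The main obstacle is the second step: a uniform, case-free proof that $\kappa_{h,k}$ is strictly largest at $(m\pm1)/(2m)$ among all $h/k$. The value of $\kappa_{h,k}$ depends on $\gcd(k,m)$, on the parity of $k$, and on $ha$ modulo $\gcd(k,m)$ through $B_2$ of fractional parts. To handle this we would first reduce to $k\mid 2m$-type classes by showing that otherwise $\kappa_{h,k}\le 0$ or is $O(1/k^2)$ with a small constant. We would then bound $B_2$ differences explicitly, treating the finitely many residue configurations.
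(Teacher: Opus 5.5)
Your outline (transformation law at every root of unity, locate the dominant cusps, circle method, add the two conjugate contributions) is the paper's outline. The paper computes the cusp behaviour with the Jacobi theta transformation plus an elliptic shift (\Cref{Fin1}). It then quotes Zuckerman's exact formula instead of running a Farey dissection, which works because $q^{-\frac{3\ell}{2m}}G_m^{\ell}$ is a genuine modular function, so no Wright-type variant is needed.

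The first gap is the step you flag as the main obstacle. You read the dominant cusps off the claimed answer rather than deriving them, and your description of them is wrong. Since $m$ is odd, $m\pm1$ is even, so $\frac{m\pm1}{2m}=\frac{(m\pm1)/2}{m}$ is reduced with $k=m$, not $k=2m$; $e^{2\pi i(m\pm1)/(2m)}$ is a primitive $m$-th root of unity. At genuine primitive $2m$-th roots of unity the exponent is at most a quarter of the maximal one, so a search organized around $k=2m$ points the wrong way. What makes the maximization short and case-free is that the exponent depends on $h/k$ only through $k$, $d:=\gcd(k,m)$ and $j\equiv h\pmod{d}$. In your normalization $q=\zeta e^{-z}$ one finds $\kappa_{h,k}=\frac{2d^2}{mk^2}\,g\left(\frac jd\right)$, where $g(t):=B_2(\{2t\})-B_2(t)$; that is, $g(t)=3t^2-t$ on $[0,\frac12]$ and $g(1-t)=g(t)$. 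Since $g$ is convex on $[0,\frac12]$ with $g(0)=0$ and $d$ is odd, the maximum over $j$ is at $j=\frac{d\pm1}{2}$, with value $\frac{(d-1)(d-3)}{4d^2}$. For fixed $d$ the resulting bound $\frac{(d-1)(d-3)}{2mk^2}$ is largest at the smallest $k$ with $d\mid k$, namely $k=d$. Since $\frac{(d-1)(d-3)}{d^2}$ increases in $d$, the maximum is then at $d=m$. This gives exactly $k=m$, $h=\frac{m\pm1}{2}$ and $\kappa=\frac{(1-\frac1m)(1-\frac3m)}{2m}$, with strict inequality at every other cusp. No reduction to special classes of $k$ or case analysis of residue configurations is needed; the paper's \Cref{lem1} carries out essentially this maximization.

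The second gap is that you prove the sign statement only ``for those $n$ where the cosine is nonzero''. The theorem asserts the asymptotic, and hence the sign formula, for all large $n$, so you must show $\cos(\pi(\phi_m\ell-\frac nm))\neq0$ for every $n$. The paper does this by parity: $\phi_m\ell-\frac nm=j+\frac12$ with $j\in\mathbb{Z}$ would give $(m-2)(m-3)\ell-2mn=m^2(2j+1)$, whose left side is even and right side odd. Combined with your bounded-denominator remark, this gives a uniform lower bound for $|\cos|$, so the main term does dominate. A minor slip: for a weight-$0$ function the arc integral of $e^{\gamma/z+nz}$ produces $I_1$, not $I_{1/2}$. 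It is $n^{-\frac12}I_1(2\sqrt{\gamma n})$ that yields the factor $n^{-\frac34}$.
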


Next, we  explicitly determine the sign in Theorem \ref{A}. For a property $P$, define
\[
\chi_P:=\begin{cases}
1 &\text{if}\ \text{$P$ holds},\\
0&\text{otherwise}.
\end{cases}
\] 

Moreover let
\[
\mathcal{S}^{[\ell]}_{m,1}:=\begin{cases}
\left[\frac{m+1}{2}-\frac{5\ell}{2},\frac{3m-1}{2}-\frac{5\ell}{2} \right]&\ \text{if}\ \ell\ \text{is even},\\[0.3 em]
\left[m-\frac{5\ell-1}{2},2m-\frac{5\ell+1}{2} \right]&\ \text{if}\ \ell\ \text{is odd}.
\end{cases}
\]

\begin{corollary}\label{BenCor}
	Let $\ell\in \mathbb{N}$, $m\in \mathbb{N}_{>3}$ odd with $m>6\ell$, and define $0\le r\le 2m-1$ via $n\equiv r\pmod{2m}$. Then, for $n\gg1$, 
	\[
	\textnormal{sgn}\left(c^{[\ell]}_{m}(n)\right)=(-1)^{r+\left\lfloor\frac{\ell}{2} \right\rfloor+\chi_{r\in\mathcal{S}^{[\ell]}_{m,1} } }.
	\]
\end{corollary}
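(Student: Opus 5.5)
The plan is to read off the sign from the second part of Theorem \ref{A}: for $n\gg1$,
\[
\textnormal{sgn}\left(c^{[\ell]}_m(n)\right)=(-1)^n\,\textnormal{sgn}\left(\cos\left(\pi\left(\phi_m\ell-\tfrac nm\right)\right)\right),
\]
and then to determine the sign of the cosine explicitly. Since $(-1)^n=(-1)^r$ (because $2m$ is even), it remains to show that the cosine has sign $(-1)^{\lfloor \ell/2\rfloor+\chi_{r\in\mathcal{S}^{[\ell]}_{m,1}}}$. We must also check that it never vanishes, so that the asymptotic formula really does determine the sign.

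First expand the phase. We have $\phi_m=\frac{(m-2)(m-3)}{2m^2}=\frac12-\frac{5}{2m}+\frac{3}{m^2}$. Hence
\[
\pi\left(\phi_m\ell-\tfrac nm\right)=\tfrac{\pi\ell}{2}-\pi x,
\qquad
x:=\frac{2n+5\ell}{2m}-\frac{3\ell}{m^2}.
\]
Using $\cos(\frac{\pi\ell}{2}-\pi x)$, we get $(-1)^{\ell/2}\cos(\pi x)$ for $\ell$ even and $(-1)^{(\ell-1)/2}\sin(\pi x)$ for $\ell$ odd. In both cases this produces the factor $(-1)^{\lfloor \ell/2\rfloor}$. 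Both $\cos(\pi x)$ and $\sin(\pi x)$ have period $2$ in $x$, and $x$ changes by $2$ when $n$ changes by $2m$. So the sign depends only on $r$, and we may replace $n$ by $r$.

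The key step, which is the only real obstacle, is to control the perturbation $\frac{3\ell}{m^2}$. Set $x_0:=\frac{2r+5\ell}{2m}$. This is a multiple of $\frac1{2m}$.

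For $\ell$ even, the numerator $2r+5\ell$ is even. The zeros of $\cos(\pi x)$ lie at $x\equiv \frac12,\frac32\pmod 2$, that is, at $\frac{m}{2m},\frac{3m}{2m}$, whose numerators are odd because $m$ is odd. Hence $x_0$ has distance at least $\frac1{2m}$ from every zero.

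For $\ell$ odd, the numerator $2r+5\ell$ is odd. The zeros of $\sin(\pi x)$ lie at $x\equiv 0,1\pmod 2$, with even numerators $0$ and $2m$ over $2m$. Again the distance is at least $\frac1{2m}$.

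The hypothesis $m>6\ell$ is exactly $\frac{3\ell}{m^2}<\frac1{2m}$. Therefore $x$ lies strictly in the same open interval between consecutive zeros as $x_0$. In particular the trigonometric factor is nonzero, and its sign equals the sign at $x_0$.

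Finally, locate $x_0$ modulo $2$.

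For $\ell$ even, $\cos(\pi x_0)<0$ exactly when $\frac m2<r+\frac{5\ell}{2}<\frac{3m}{2}$ modulo $2m$. Since $r+\frac{5\ell}2$ is an integer and $m$ is odd, this means
\[
\frac{m+1}{2}-\frac{5\ell}{2}\le r\le \frac{3m-1}{2}-\frac{5\ell}{2}.
\]

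For $\ell$ odd, $\sin(\pi x_0)<0$ exactly when $m<r+\frac{5\ell}2<2m$ modulo $2m$. Since $r+\frac{5\ell}{2}$ is a half-integer, this means
\[
m-\frac{5\ell-1}{2}\le r\le 2m-\frac{5\ell+1}{2}.
\]

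These are precisely the two cases defining $\mathcal{S}^{[\ell]}_{m,1}$, where membership is understood for residues modulo $2m$. The sign of the trigonometric factor is thus $(-1)^{\chi_{r\in\mathcal{S}^{[\ell]}_{m,1}}}$. Combining this with the factors $(-1)^r$ and $(-1)^{\lfloor\ell/2\rfloor}$ gives the corollary.
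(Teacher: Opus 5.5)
Your proof is correct and takes essentially the paper's route. You use the same rewriting $\pi(\phi_m\ell-\frac{r}{m})=\frac{\pi\ell}{2}-\pi(\frac{R}{m}-\frac{3\ell}{m^2})$ with $R=r+\frac{5\ell}{2}$ and the same use of $m>6\ell$ as $\frac{3\ell}{m^2}<\frac{1}{2m}$; your parity/lattice-distance argument replaces the paper's splitting of $5\le R<\frac{5m}{2}$ into three ranges and treats odd $\ell$ (via $\sin$) explicitly, where the paper only says ``analogously''. The one point to state outright, rather than via ``membership modulo $2m$'', is that $m>6\ell$ puts both intervals defining $\mathcal{S}^{[\ell]}_{m,1}$ inside $[0,2m-1]$ (e.g.\ $\frac{m+1}{2}-\frac{5\ell}{2}>0$ and $m-\frac{5\ell-1}{2}>0$), so the condition modulo $2m$ coincides with literal membership $r\in\mathcal{S}^{[\ell]}_{m,1}$ as in the statement.
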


Next, we consider the growth of $c^{[\ell]}_m(n)$
\rm
for $m\in \mathbb{N}_{\ge 4}$ even and $\ell\in \mathbb{N}$.

\begin{theorem}\label{mainthmeven}
	We have, for $m\in \mathbb{N}_{\ge 4}$ even and $\ell\in \mathbb{N}$, as $n\to \infty$, 
	\[
	c^{[\ell]}_m(n)\sim \frac{\ell^{\frac 14}}{2^{\ell+\frac 54}m^{\frac 14}\sin^{\ell}\left(\frac{2\pi}{m}\right)}\frac{(-1)^ne^{\pi\sqrt{\frac{2\ell}{m}n}}}{n^{\frac 34}}.
	\]
\end{theorem}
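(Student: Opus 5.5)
The plan is to run the circle method (in the form of a Wright-type saddle point lemma) on $F_m^\ell$, after first showing that for $m$ even the dominant singularity sits at $q=-1$. That location is consistent with the factor $(-1)^n$. Write
\[
F_m(q)=\frac{(q,q^{m-1};q^m)_\infty}{(q^2,q^{m-2};q^m)_\infty}=\frac{\vartheta(q;q^m)}{\vartheta(q^2;q^m)},\qquad \vartheta(x;Q):=(x,Q/x,Q;Q)_\infty .
\]
By the Jacobi triple product this is a quotient of weight $\frac12$ theta functions with characteristics. Hence $F_m^\ell$ is a weight $0$ modular function on some $\Gamma_1(N)$ with $N\mid 2m^2$, and at every root of unity $\zeta_k^h$ its behaviour is governed by the theta transformation law.

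First I would compute the local behaviour at $q=-1$. Since $m$ is even, setting $q=-e^{-t}$ with $\mathrm{Re}(t)>0$ gives $q^m=e^{-mt}$ and $q^2=e^{-2t}$. The numerator becomes $(-e^{-t},-e^{-(m-1)t};e^{-mt})_\infty$ and the denominator becomes $(e^{-2t},e^{-(m-2)t};e^{-mt})_\infty$.

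Heuristically, Euler--Maclaurin gives
\[
\log(-e^{-t};e^{-mt})_\infty\sim \frac{\pi^2}{12mt},\qquad \log(e^{-2t};e^{-mt})_\infty\sim-\frac{\pi^2}{6mt}.
\]
So $\log F_m(-e^{-t})\sim \frac{\pi^2}{2mt}$. To make this exact, I would apply the modular transformation $t\mapsto 4\pi^2/(mt)$ to the two theta functions $\vartheta(-e^{-t};e^{-mt})$ and $\vartheta(e^{-2t};e^{-mt})$. This should give
\[
F_m^\ell(-e^{-t})=C_m^\ell\, e^{\frac{\pi^2\ell}{2mt}}\left(1+O\!\left(e^{-\frac{c}{|t|}}\right)\right),\qquad C_m^{-1}=2\sin\!\left(\frac{2\pi}{m}\right).
\]
The constant comes from the lowest terms of the transformed thetas: the factor $2\sin(2\pi/m)$ arises from $1-e^{\pm 4\pi i/m}$ in the denominator, and the corresponding numerator term is $1$. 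No power of $t$ appears, since the weights cancel.

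Next I would show that every other cusp is exponentially smaller. At $h/k$ with $k$ odd or $k\not\equiv 2\pmod 4$, the same transformation gives a growth exponent $\frac{\pi^2}{mk^2t}\,\kappa_{h,k}$, where $\kappa_{h,k}$ is a difference of values of the periodic Bernoulli function $\overline B_2$ at the relevant characteristics. At $k=1$ the two thetas have equal leading order $-\pi^2/(6mt)$, so there is no exponential growth. I need $\kappa_{h,k}/k^2<\ell/2$ for every cusp other than $1/2$. This is a finite check over $k\mid N$, because $\overline B_2$ values are bounded by $\frac16$ and only finitely many classes occur; the resulting $k^{-2}$ factors make larger $k$ harmless.

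With the asymptotic at $q=-1$ and bounds away from it, I would apply the standard lemma: if $G(e^{-t})=Ce^{A/t}(1+O(t))$ near $t=0$ in a cone and $G$ is exponentially smaller on the rest of the circle $|q|=e^{-\varepsilon}$, then
\[
[q^n]G\sim \frac{C A^{1/4}}{2\sqrt\pi\, n^{3/4}}\,e^{2\sqrt{An}}.
\]
I would use this with $G(q)=F_m^\ell(-q)$, $A=\frac{\pi^2\ell}{2m}$ and $C=2^{-\ell}\sin^{-\ell}(2\pi/m)$. This gives $e^{2\sqrt{An}}=e^{\pi\sqrt{2\ell n/m}}$. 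For the constant, $A^{1/4}=\pi^{1/2}\ell^{1/4}(2m)^{-1/4}$, so
\[
\frac{C A^{1/4}}{2\sqrt\pi}=\frac{\ell^{1/4}}{2^{\ell+5/4}m^{1/4}\sin^\ell(2\pi/m)},
\]
matching Theorem \ref{mainthmeven}. Replacing $q$ by $-q$ then produces the factor $(-1)^n$.

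The main obstacle is the uniform bound at the non-dominant cusps, in particular those with $k\equiv 2\pmod 4$ other than $1/2$ and the cusps where the characteristic of $\vartheta(q^2;q^m)$ is close to an integer. There the denominator theta could nearly vanish, and the $\overline{B}_2$ bookkeeping must be done carefully. My first approach would be to write the exponent explicitly as
\[
\frac{\pi^2}{mk^2}\Big(\overline B_2\big(\tfrac{2h'}{k}\cdots\big)-\overline B_2\big(\tfrac{h'}{k}\cdots\big)\Big),
\]
and bound it by $\frac{\pi^2}{4m}\cdot\frac{4}{k^2}\cdot\frac12$. This is comfortably below $\frac{\pi^2\ell}{2m}$ for $k\ge3$, which leaves only $k\in\{1,2\}$ to handle by hand.
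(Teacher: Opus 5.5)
\textbf{Gap in the non-dominant cusp bound.} Your local analysis at $q=-1$ is correct: the exponent $\frac{\pi^2\ell}{2mt}$, the constant $(2\sin\frac{2\pi}{m})^{-\ell}$ and the final bookkeeping all agree with Lemma~\ref{Lem2} and the theorem. However, the step you yourself flag as the main obstacle is resolved incorrectly.

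The growth at a cusp $h/k$ is governed by how close the nome $q^m$ is to a root of unity of small order, not by $k$. Converting Lemma~\ref{Fin1} to your variable $q=e^{2\pi ih/k}e^{-t}$ (so $t=2\pi z/k$), the exponent is
\[
\frac{2\pi^2\ell}{m\kappa^2 t}\left(\{x\}-\{x\}^2-\{2x\}+\{2x\}^2\right),\qquad d=\gcd(k,m),\quad \kappa=\frac kd,\quad x=\frac hd .
\]
So the decay is in $\kappa^{-2}$, not $k^{-2}$, and for every $k\mid m$ there is no decay in $k$ at all. Two examples:
\begin{enumerate}
\item For $m=10$, $\ell=1$ and $h/k=2/5$, the exponent is $\frac{2\pi^2}{125t}$. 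This is eight times your claimed bound $\frac{\pi^2}{2mk^2t}=\frac{\pi^2}{500t}$.
\item If $p>3$ is an odd divisor of $m$, the cusp $\frac{(p-1)/2}{p}$ has exponent $(1-\frac1p)(1-\frac3p)\frac{\pi^2\ell}{2mt}$. This is arbitrarily close to the dominant one as $p$ grows, so ``comfortably below'' fails.
\end{enumerate}
Hence the reduction to $k\in\{1,2\}$ is false. What is actually needed is the even-$m$ analogue of Lemma~\ref{lem1}. The bracket is at most $\frac14$, with equality only when $\{h/d\}=\frac12$. Together with $\gcd(h,k)=1$ this forces $d=2$, so $\kappa=1$ occurs only for $h/k=1/2$. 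Since $d\mid m$ takes finitely many values, this gives a uniform gap.

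There is a second issue. Your transfer lemma needs a bound that is uniform over the whole circle $|q|=e^{-\varepsilon}$. Pointwise cusp asymptotics with $O_{h,k}(1)$ errors do not supply this. The clean fix is to apply Zuckerman's exact formula (Theorem~\ref{Thm:Zuckerman}) to the modular function $q^{-3\ell/(2m)}G_m^\ell$, as the paper does for odd $m$.

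\textbf{Comparison with the paper.} Once repaired, your route works, but the paper avoids the cusp analysis entirely. For $m$ even,
\[
F_m^\ell(-q)=\frac{(-q,-q^{m-1};q^m)_\infty^\ell}{(q^2,q^{m-2};q^m)_\infty^\ell}
\]
visibly has nonnegative coefficients. So does $(1-q)F_m^\ell(-q)$, because the factor $\frac{(1+q)^\ell}{(1-q^2)^\ell}=(1-q)^{-\ell}$ absorbs the $1-q$ (Lemma~\ref{Lem1}). Thus $(-1)^nc_m^{[\ell]}(n)$ is nonnegative and weakly increasing.

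Ingham's Tauberian theorem (Proposition~\ref{Ingham}) then needs only the behaviour of $F_m^\ell(-e^{-z})$ as $z\to0$ in cones, which is exactly your first step. No information about other cusps is required, and positivity gives the sign $(-1)^n$ for every $n$ (Corollary~\ref{mainthmevencor}). Your circle-method route does not need positivity and could in principle give error terms. Here, though, it costs a full dominant-cusp analysis that the Tauberian argument makes unnecessary.
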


In this case, we have the following sign-pattern of $c^{[\ell]}_m(n)$.
\begin{corollary}\label{mainthmevencor}
	We have, for $m\in \mathbb{N}_{\ge 4}$ even, $\ell\in \mathbb{N}$, and $n\in \mathbb{N}_0$,
	\[
	\textnormal{sgn}\left(c^{\left[\ell\right]}_{m}(n)\right)=(-1)^n.
	\]
\end{corollary}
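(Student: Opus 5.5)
The corollary claims the sign $(-1)^n$ for \emph{every} $n\in\mathbb{N}_0$, not only for $n\gg1$. Theorem \ref{mainthmeven} alone would therefore only give the statement for large $n$. Instead of the asymptotic, I would use a direct combinatorial argument: replace $q$ by $-q$ and show that the resulting series has strictly positive coefficients. Concretely, the claim is equivalent to
\[
F_m^{\ell}(-q)=\sum_{n\ge0}(-1)^n c_m^{[\ell]}(n)q^n
\]
having all coefficients positive.

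The first step is to compute $F_m(-q)$ using that $m$ is even. Under $q\mapsto -q$ we have $q^m\mapsto q^m$. The exponents $mk+1$ and $mk+m-1$ are odd, so $q^{mk+1}\mapsto -q^{mk+1}$ and $q^{mk+m-1}\mapsto -q^{mk+m-1}$. The exponents $mk+2$ and $mk+m-2$ are even, so those powers are unchanged. Hence
\[
F_m^{\ell}(-q)=\frac{\left(-q,-q^{m-1};q^m\right)_\infty^{\ell}}{\left(q^2,q^{m-2};q^m\right)_\infty^{\ell}}
=\prod_{k\ge0}\frac{\left(1+q^{mk+1}\right)^{\ell}\left(1+q^{mk+m-1}\right)^{\ell}}{\left(1-q^{mk+2}\right)^{\ell}\left(1-q^{mk+m-2}\right)^{\ell}}.
\]
Each numerator factor $1+q^{j}$ has nonnegative coefficients. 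Each denominator factor, expanded as $(1-q^j)^{-1}=\sum_{i\ge0}q^{ji}$, also has nonnegative coefficients. So the whole product has nonnegative coefficients, i.e. $(-1)^n c_m^{[\ell]}(n)\ge0$ for all $n$. This holds also for $m=4$, where $q^2$ and $q^{m-2}$ coincide and simply give the denominator $(q^2;q^4)_\infty^{2\ell}$.

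The remaining point is strict positivity, since $\operatorname{sgn}(0)=0$ would break the statement. I would argue by coefficientwise domination. Since all factors have nonnegative coefficients and constant term $1$, the coefficients of $F_m^{\ell}(-q)$ dominate those of any subproduct. Take the subproduct $(1+q)^{\ell}(1-q^2)^{-\ell}$, coming from the $k=0$ factor $1+q$ of the numerator and the $k=0$ factor $1-q^2$ of the denominator. Already
\[
(1+q)(1-q^2)^{-1}=\frac{1}{1-q}=\sum_{n\ge0}q^n
\]
has every coefficient equal to $1$. The remaining factor $(1+q)^{\ell-1}(1-q^2)^{-(\ell-1)}$ has nonnegative coefficients and constant term $1$. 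Therefore every coefficient of $F_m^{\ell}(-q)$ is at least $1$, which gives $\operatorname{sgn}(c_m^{[\ell]}(n))=(-1)^n$ for all $n\ge0$.

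The main obstacle is the positivity step, and it is easy here because the $k=0$ factors furnish $1/(1-q)$. No asymptotic input is needed. Theorem \ref{mainthmeven} is only consistent with this result: it predicts exponential growth of $(-1)^n c_m^{[\ell]}(n)$ for large $n$.
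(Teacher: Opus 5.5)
Your proof is correct and is essentially the paper's argument. Lemma~\ref{Lem1} also reads nonnegativity of $(-1)^n c^{[\ell]}_m(n)$ off the product for $F^{\ell}_m(-q)$, and it uses the same $k=0$ cancellation $(1+q)/(1-q^2)=1/(1-q)$, phrased as the coefficients of $(1-q)F^{\ell}_m(-q)$ being nonnegative, i.e.\ $(-1)^n c^{[\ell]}_m(n)$ is weakly increasing. Together with the constant term $1$ this gives strict positivity for all $n$, which is exactly your coefficientwise domination by $1/(1-q)$, except that you state explicitly the $\geq 1$ step that the paper leaves implicit.
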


We next turn to $\ell<0$. Writing $\ell=-\l$ with $\l\in\mathbb{N}$, for $m\in \mathbb{N}_{>3}$ with $\gcd(m,6)=1$, set
\begin{equation*}
\mathcal{F}^{\l}_m(q):=\frac{\left(q^2,q^{m-2};q^m\right)^{\l}_{\infty}}{\left(q,q^{m-1};q^m\right)^{\l}_{\infty}}=:\sum_{n\ge 0}\g^{[\l]}_m(n)q^n.
\end{equation*}

If $m\equiv \pm 1 \pmod{6}$ and $\l\in \mathbb{N}$, define
\begin{align*}
T^{[\l]}_m(n)&:=\cos\left(\pi\left( \frac{\l}{6}-\frac{ n}{3}\pm\left(\frac{ 2n-\l}{6m}+\frac{\l}{m^2}\right)\right)\right).
\end{align*}

We show the following result:  
\begin{theorem}\label{A1}
	Let $m\in \mathbb{N}_{>3}$ with $\gcd(m,6)=1$ and $\l\in \mathbb{N}$. Then, as $n\to \infty$, 
	\begin{align*}
	\g^{[\l]}_m(n)&\sim \left(\frac{\left(1-\frac 1m\right)\left(1+\frac 1m\right)\l}{6m}\right)^{\frac 14}\frac{T^{[\l]}_m(n)e^{\pi\sqrt{\frac{2\left(1-\frac 1m\right)\left(1+\frac 1m\right)\l}{3m}n }}}{n^{\frac 34}}.
	\end{align*}
\end{theorem}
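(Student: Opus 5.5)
The plan is to use the circle method, with the dominant cusps at the primitive cube roots of unity. This is suggested by the factor $\cos(\pi(\frac{\l}{6}-\frac n3\pm\dots))$ in $T^{[\l]}_m(n)$, and it requires $\gcd(m,6)=1$. First I would write $\mathcal{F}^{\l}_m$ as a quotient of Jacobi theta functions. By the Jacobi triple product,
\[
\left(q^a,q^{m-a},q^m;q^m\right)_\infty=\sum_{n\in\mathbb{Z}}(-1)^nq^{\frac{m n^2}{2}+\frac{(m-2a)n}{2}},
\]
so $\mathcal{F}^{\l}_m(q)=\vartheta_{m,2}(q)^{\l}/\vartheta_{m,1}(q)^{\l}$, where the factors $(q^m;q^m)_\infty$ cancel. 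Up to a power of $q$, each $\vartheta_{m,a}$ is a weight $\frac12$ modular form (a Klein form or generalized eta function) on $\Gamma_1(m)$ or a subgroup, so $\mathcal{F}^{\l}_m$ is, up to $q^{\text{const}}$, a weight $0$ modular function with no poles in $\mathbb{H}$.

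Next I would determine the behaviour of $\mathcal{F}^{\l}_m$ near every root of unity $\zeta_k^h$. Set $q=e^{2\pi i(h/k+iz/(2\pi))}$ with $\mathrm{Re}(z)>0$ and apply the modular transformation of the theta functions, namely Poisson summation in the residue classes modulo $mk$. This gives
\[
\mathcal{F}^{\l}_m(q)=\omega_{h,k}\,\exp\!\left(\frac{\l\pi^2 C_{m}(k)}{z}\right)\bigl(1+O(e^{-\delta/|z|})\bigr).
\]
Here $C_m(k)$ is an explicit combination of values of $B_2(x)=x^2-x+\frac16$ at fractional parts determined by $ha/k$ and $m$ (equivalently, by the smallest exponent in the transformed theta series), and $\omega_{h,k}$ is a root of unity with a power of $q$ absorbed. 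Two cases already suggest the outcome:
\begin{itemize}
\item For $k=1$ one obtains $C_m(1)=\frac1m\bigl(B_2(\frac1m)-B_2(\frac2m)\bigr)=\frac{3}{m^2}\bigl(1-\frac1m\bigr)$.
\item For $k=3$ with $3\nmid m$, the conditions $\gcd(m,3)=1$ and $m$ odd make the relevant fractional parts differ. The computation should give $C_m(3)=\frac{1}{6m}\bigl(1-\frac1m\bigr)\bigl(1+\frac1m\bigr)$, which matches the exponent in the statement, since a saddle point of $e^{c/z+nz}$ contributes $e^{2\sqrt{cn}}$.
\end{itemize}
The main obstacle is the uniform comparison: showing $C_m(k)<C_m(3)$ for all $k\neq 3$ (in particular $k=1,2,6$ and $k\mid m$), and that both $h=1,2$ at $k=3$ attain the maximum. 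I would first bound $C_m(k)\le \frac{c}{k^2}$ using $|B_2|\le\frac16$, which handles all $k\ge 4$ with $k\nmid m$ at once. Then I would treat $k\in\{1,2,3,6\}$ and the divisors of $m$ by explicit evaluation of the $B_2$ expressions, where the hypothesis $\gcd(m,6)=1$ enters.

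Finally, I would run the circle method in the Wright/Ingham-type form used in \cite{BHK}. Writing $\g^{[\l]}_m(n)=\frac{1}{2\pi i}\oint \mathcal{F}^{\l}_m(q)q^{-n-1}\,dq$ on the circle of radius $e^{-2\pi/N}$ with $N\asymp\sqrt n$, the major arcs at $h/k=\frac13,\frac23$ are evaluated via the $I$-Bessel function. Its asymptotic $I_{-1}(x)\sim e^x/\sqrt{2\pi x}$ yields the factor $\bigl(\frac{(1-\frac1m)(1+\frac1m)\l}{6m}\bigr)^{1/4}n^{-3/4}e^{\pi\sqrt{\cdots}}$. The two conjugate contributions $\omega_{1,3}\zeta_3^{-n}$ and $\omega_{2,3}\zeta_3^{n}$ combine to give $2\,\mathrm{Re}$, i.e.\ the cosine $T^{[\l]}_m(n)$. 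I would check the phase directly from the root of unity in the transformation law: the $\frac{\l}{6}$ comes from the theta multiplier, and the $\pm(\frac{2n-\l}{6m}+\frac{\l}{m^2})$ comes from the shift of $q$-power and the sign determined by $m\equiv\pm1\pmod 6$. All other arcs contribute $O\bigl(e^{(2-\varepsilon)\pi\sqrt{\l C_m(3)n}}\bigr)$ by the gap established in the previous step. When $T^{[\l]}_m(n)$ is near $0$, the asymptotic should be read as an equality up to an error of lower exponential order, as in Theorem~\ref{A}.
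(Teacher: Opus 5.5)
\textbf{The proposal has a genuine gap: the dominant cusps are misidentified.}

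As $q\to\zeta_3^{\pm1}$ with $\gcd(3,m)=1$, the function $\mathcal{F}^{\lambda}_m$ has no exponential growth at all. In the paper's notation (Lemma~\ref{Fin1a}), $d=\gcd(m,k)=1$ forces $\{\frac{\mu h}{m}\}=\{h\}=0$, so $\mathcal{A}_{m,\lambda}(h,k)=0$. You can also see it directly from
\[
\log\mathcal{F}_m(q)=-\sum_{r\ge1}\frac1r\,\frac{q^{2r}+q^{(m-2)r}-q^{r}-q^{(m-1)r}}{1-q^{mr}}.
\]
With $q=\zeta_3e^{-t}$, the only terms with a pole at $t=0$ are those with $3\mid r$, and for them the numerator at $t=0$ is $1+1-1-1=0$. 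So the coefficient of $1/t$ vanishes.

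The same happens for every $k$ coprime to $m$, in particular $k=1$, so your value $C_m(1)=\frac{3}{m^2}(1-\frac1m)$ is also wrong. Your $C_m(3)$ was read off from the target, not computed. The comparison you call the main obstacle, $C_m(k)<C_m(3)$ for $k\neq3$, is false, since $C_m(3)=0$.

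As the paper shows in Lemma~\ref{lem1a}, the maximum is attained at $k=d=m$ with $h\in\{\frac{m\mp1}{6},\frac{5m\pm1}{6}\}$. So $q$ tends to the primitive $m$-th root of unity $e^{2\pi i\frac{m\mp1}{6m}}$ (close to $e^{\pi i/3}$) and its conjugate. There, with $q=e^{\frac{2\pi i}{m}(h+iz)}$ and $x=\frac{m\mp1}{6m}$, the exponent is $\frac{\pi\lambda}{z}(x-3x^2)=\frac{\pi\lambda}{12z}(1-\frac1{m^2})$. In your normalization this is exactly $\lambda\pi^2\cdot\frac{1}{6m}(1-\frac1{m^2})$. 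So your constant is right, but it belongs to this cusp. The hypothesis $\gcd(m,6)=1$ is what makes $|6j-m|=1$ attainable with $j=\frac{m\mp1}{6}$ an integer coprime to $m$. It has nothing to do with fractional parts at $k=3$.

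\textbf{The phase confirms the problem.} A cusp $q\to e^{2\pi ih/k}$ contributes the $n$-dependence $e^{-2\pi inh/k}$. Shifting by a power of $q$ only changes $n$-independent constants. At $\zeta_3^{\pm1}$ you would therefore get $\cos(\frac{2\pi n}{3}+c)$. But $T^{[\lambda]}_m(n)$ depends on $n$ through $\pi(-\frac n3\pm\frac{n}{3m})=-2\pi n\frac{m\mp1}{6m}$, which is the signature of the cusp $\frac{m\mp1}{6m}$. The term $\pm\frac{2n}{6m}$ can never come from a cube root of unity.

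\textbf{How to repair it.}
\begin{itemize}
\item Redo the cusp analysis with $d=\gcd(m,k)$. The growth exponent at $h/k$ is $\frac{\pi\lambda d^2}{mkz}\bigl(f(\{\frac{2h}{d}\})-f(\{\frac hd\})\bigr)$ with $f(x)=x-x^2$; maximize it over $d\mid m$, $k$, and $h$.
\item Compute the leading constant at $k=m$ via the triple product and Lemma~\ref{newlem}~(3).
\item Feed this into Zuckerman's formula.
\item Prove $T^{[\lambda]}_m(n)\neq0$ by the parity argument. Since $T^{[\lambda]}_m$ is $6m$-periodic in $n$, this gives a genuine asymptotic, so your caveat about $T$ being near $0$ is unnecessary.
\end{itemize}
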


To state the sign pattern of $\g^{[\l]}_m(n)$ for $m\equiv \pm 1\pmod{6}$, we define
\[
\mathcal{S}^{[\l],\pm}_{m,2}:=\begin{cases}
\left[
\frac{m+1}{2}+\frac{\l}{6},\,
\frac{3m-1}{2}+\frac{\l}{6}
\right] &\text{if}\ \ \l\equiv0\pmod{6},\\[0.3 em]
\left[
\frac{(3\mp1)m+4}{6}+\frac{\l-1}{6},\,
\frac{(9\mp1)m-2}{6}+\frac{\l-1}{6}
\right]       &\text{if}\ \ \l\equiv1\pmod{6},\\[0.5 em]
\left[
\frac{(3\mp2)m+5}{6}+\frac{\l-2}{6},\,
\frac{(9\mp2)m-1}{6}+\frac{\l-2}{6}
\right]        &\text{if}\ \ \l\equiv2\pmod{6},\\[0.3 em]
\left[
\frac{\l+3}{6},\,
m+\frac{\l-3}{6}
\right]       &\text{if}\ \ \l\equiv3\pmod{6},\\[0.3 em]
\left[
\frac{(3\pm2)m+7}{6}+\frac{\l-4}{6},\,
\frac{(9\pm2)m+1}{6}+\frac{\l-4}{6}
\right]          &\text{if}\ \ \l\equiv4\pmod{6},\\[0.5 em]
\left[
\frac{(3\pm1)m+8}{6}+\frac{\l-5}{6},\,
\frac{(9\pm1)m+2}{6}+\frac{\l-5}{6}
\right]        &\text{if}\ \ \l\equiv5\pmod{6}.
\end{cases}
\]

\begin{corollary}\label{FlipCor}
	Let $\l\in\mathbb N$, $m\in\mathbb N_{>3}$ with $m\equiv \pm 1\pmod{6}$ and
	$\l < \frac{m}{2}$, and assume that $n\equiv 3r+2mj\pmod{6m}$ with $0\le r\le2m-1$ and $0\le j\le2$. Then, for $n\gg1$, 
	\[
	\operatorname{sgn}\left(\g^{[\l]}_m(n)\right)=(-1)^{r+\left\lfloor\frac{\l}{6}\right \rfloor+\chi_{r\in \mathcal{S}^{[\l],\pm}_{m,2}}}\begin{cases}
	1 &\text{if}\ \ \l\equiv 0,1,2\pmod{6},\\
	-1 &\text{if}\ \ \l\equiv 4,5\pmod{6},\\
	\left(\frac m3\right)&\text{if}\ \ \l\equiv 3\pmod{6},
	\end{cases}
	\]
	where $\left(\frac{\cdot}{\cdot}\right)$ denotes the Jacobi-symbol.
\end{corollary}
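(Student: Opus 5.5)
The plan is to derive the corollary directly from Theorem~\ref{A1}. Since the main term there is a positive factor times $T^{[\l]}_m(n)$, we get $\operatorname{sgn}(\g^{[\l]}_m(n))=\operatorname{sgn}(T^{[\l]}_m(n))$ for $n\gg1$, provided $|T^{[\l]}_m(n)|$ is bounded away from $0$ uniformly in $n$. Note that $T^{[\l]}_m(n)$ depends only on $n \pmod{6m}$. So the work is in two parts: show that $T^{[\l]}_m$ never vanishes on any residue class, and compute its sign explicitly.

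\emph{Step 1 (reduction modulo $6m$).} Write $n=3r+2mj+6mk$ with $k\in\mathbb Z$. Then
\[
-\frac n3=-r-\frac{2mj}{3}-2mk,\qquad \frac{2n-\l}{6m}=\frac rm+\frac{2j}{3}+2k-\frac{\l}{6m}.
\]
The $j$-dependence combines to $-\frac{2j(m\mp1)}{3}$. Because $m\equiv\pm1\pmod 6$, we have $6\mid m\mp1$, so this is an even integer and drops out of the cosine modulo $2\pi$. The terms in $k$ are also even integers. The term $-r$ contributes $(-1)^r$. Next write $\l=6a+b$ with $a=\lfloor \l/6\rfloor$ and $0\le b\le5$; the integer $a$ contributes $(-1)^a$. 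We obtain
\[
T^{[\l]}_m(n)=(-1)^{r+\lfloor \l/6\rfloor}\cos\left(\pi\left(x\pm\frac{\l}{m^2}\right)\right),\qquad x:=\frac{bm\pm(6r-\l)}{6m}.
\]
This explains the factor $(-1)^{r+\lfloor\l/6\rfloor}$ and why the answer is independent of $j$.

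\emph{Step 2 (non-vanishing; this is where $\l<\frac m2$ enters).} I expect this to be the main point. The numerator $N:=bm\pm(6r-\l)$ satisfies
\[
N\equiv bm\mp b\equiv b(m\mp1)\equiv 0\pmod 6.
\]
A zero of $\cos(\pi x)$ would require $N\equiv 3m\pmod{6m}$. But $3m\equiv 3\pmod 6$ since $m$ is odd, so $N$ differs from every odd multiple of $3m$ by an integer that is $\equiv3\pmod 6$, hence of absolute value at least $3$. Thus $x$ has distance at least $\frac{3}{6m}=\frac1{2m}$ from $\frac12+\mathbb Z$. Since $\frac{\l}{m^2}<\frac{1}{2m}$ by hypothesis, the perturbation $\pm\frac{\l}{m^2}$ cannot move $x$ across a zero of $\cos(\pi\,\cdot)$. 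Hence $\operatorname{sgn}(T^{[\l]}_m(n))=(-1)^{r+\lfloor\l/6\rfloor}\operatorname{sgn}(\cos(\pi x))$, and $|T^{[\l]}_m(n)|$ is bounded below by a positive constant, as needed.

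\emph{Step 3 (explicit sign).} It remains to decide, for $0\le r\le 2m-1$, when $x\pmod 2$ lies in $(\frac12,\frac32)$, where $\cos(\pi x)<0$. As $r$ runs over $[0,2m-1]$, $x$ moves monotonically, increasing for the sign $+$ and decreasing for $-$, through an interval of length just under $2$. So the set of $r$ with $\cos(\pi x)<0$ is an interval of consecutive integers (allowing wrap-around). Solving $\frac12<x<\frac32$, or its shift by $2$, for $r$ in each case $b=0,\dots,5$ and each sign $\pm$ produces the endpoints listed in $\mathcal S^{[\l],\pm}_{m,2}$.

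In some cases it is more natural to shift $x$ by $\pm1$ so that the resulting interval sits inside $[0,2m-1]$. Such a shift produces the global factor $-1$ for $b=4,5$. For $b=3$, the endpoints of the negative set have the form $3m\cdot(\text{odd})/6m$ shifted by $\frac{\l}{6m}$, and whether a shift is needed depends on $m\pmod{12}$ through $\pm$. I would check by direct computation that this factor equals $(\frac m3)$, which is $+1$ for $m\equiv1$ and $-1$ for $m\equiv -1\pmod 6$. These six-fold, two-sign verifications are routine bookkeeping, and I would carry them out case by case.
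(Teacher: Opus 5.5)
Your proposal is correct and follows essentially the paper's route. Your $x$ equals $\pm\frac{R}{m}$ for the paper's $R=b\,\frac{\pm m-1}{6}+r-\lfloor\frac{\l}{6}\rfloor$. Your Step 2 is the paper's observation that the perturbation $0<\frac{\l}{m^2}<\frac{1}{2m}$ cannot move the argument across a zero of the cosine; you additionally make explicit the uniform non-vanishing of $T^{[\l]}_m$ that is needed to pass from the asymptotic to the sign. Your Step 3 is the paper's case analysis on $\l\pmod{6}$ and $\pm$, where wrap-around negative sets are replaced by their complements to produce the factors $-1$ and $\left(\frac m3\right)$.

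The bookkeeping you defer is what the paper carries out explicitly for $\l\equiv 0,3,4,5\pmod{6}$. One small slip: for $b=3$, whether wrap-around occurs depends only on the sign $\pm$, i.e. on $m\pmod{6}$, not on $m\pmod{12}$.
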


The paper is organized as follows. In \Cref{sec1}, we state an exact formula of Zuckerman, preliminary results on the Jacobi theta function, and Ingham's Tauberian theorem. Proofs of \Cref{A} and \Cref{BenCor} are given in \Cref{sec2}. We show \Cref{mainthmeven} and \Cref{mainthmevencor} in \Cref{sec3}. \Cref{sec4} is devoted to the proof of \Cref{A1} and \Cref{FlipCor}. Finally, we conclude this paper with \Cref{sec5} by proposing questions for future research.

\section*{Acknowledgements}
 The first and the second author have received funding from the European Research Council (ERC) under the European Union’s Horizon 2020 research and innovation programme (grant agreement No. 101001179). The research of the fourth author was supported by grants from the Research Grants Council of the Hong Kong SAR, China (project numbers HKU 17314122 and HKU 17305923). The authors are grateful to Ken Ono for using AxiomProver to formalize the results of this paper and to verify the proofs in Lean. These formal verifications were carried out by AxiomProver. The relevant files are available in the following repository:
 \url{https://github.com/bkanemath/Sign-Patterns-of-Fourier-Coefficients-of-Theta-Quotients}.

\section{Preliminaries}\label{sec1}

\subsection{Exact formulas of Rademacher and Zuckerman}

We require an exact formula for Fourier coefficients of weakly holomorphic modular functions. Rademacher \cite{Rademacher} perfected the Circle Method of Hardy and Ramanujan \cite{HardyRamanujan} and  obtained an exact formula for the partition function. This was later extended to weakly holomorphic forms of nonpositive weight in results of Rademacher--Zuckerman \cite{RademacherZuckerman} and Zuckerman \cite{Zuckerman} and by Ono and the second author \cite{BringmannOno}. To state the result, let $I_\alpha$ denote the usual $I$-Bessel function. We allow weakly holomorphic modular forms with a multiplier system. Throughout, for $a\in \mathbb{N}$, $b\in \mathbb{Z}$, with $\gcd(a,b)=1$, we denote by $[b]_a$ the inverse of $b\pmod a$.
\begin{theorem}\label{Thm:Zuckerman}
	Let $\kappa\leq 0$. Suppose that  $f$ is a weakly holomorphic modular form of weight $\k$ on a subgroup $\G$ of finite index in ${\rm SL}_2(\mathbb{Z})$. Assume that, for some $\d\ge 0$, $f(\tau) = \sum_{n\gg-\infty} a(n)q^{n+\delta}$. For $h,k\in \mathbb{N}_0$ with $0\le h<k$ and $\gcd(h,k)=1$, define $f_{h,k}$ by 
	\[
	f\left(\frac 1k\left(h+iz\right)\right)=\pa{i}{z}^{\k}f_{h,k}\left(\frac 1k\left(\left[-h\right]_k+\frac{i}{z}\right)\right)\ \ \left({\rm Re}(z)>0\right).
	\] 
Write $f_{h,k}(\t)=:\sum_{r\gg-\infty}a_{h,k}(r)q^{\frac{r+\d_{h,k}}{c_k}}$. Here $c_k\in \mathbb{N}$ and $\d_{h,k}\in [0,1)$. Then, we have, for $n+\d>0$, 
	\begin{multline*}
	a(n) =  2\pi i^{\k}(n+\delta)^{\frac{\kappa-1}{2}} \sum_{k\ge1} \dfrac{1}{k} \sum_{\substack{0 \leq h < k \\ \gcd(h,k) = 1}} e^{- \frac{2\pi i (n+\delta) h}{k}}
	\\
	\times \sum_{r+\delta_{h,k} < 0} a_{h,k}(r) e^{ \frac{2\pi i}{k c_{k}}\left(r + \delta_{h,k}\right)[-h]_k } \left( \dfrac{\lvert r +\delta_{h,k} \rvert }{c_{k}} \right)^{ \frac{1 - \kappa}{2}} I_{-\kappa+1}\left( \dfrac{4\pi}{k} \sqrt{\dfrac{\lvert r +\delta_{h,k} \rvert(n + \delta)}{c_{k}}} \right).
	\end{multline*}
\end{theorem}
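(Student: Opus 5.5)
The plan is to run the Hardy--Ramanujan--Rademacher circle method in the form used by Rademacher for $p(n)$ and extended by Zuckerman, working uniformly for each cusp representative $h/k$.

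\textbf{Setting up the circle method.} Write $q=e^{2\pi i\tau}$ and apply Cauchy's formula on the circle $|q|=e^{-2\pi/N^2}$:
\[
a(n)=\int_{0}^{1} f\!\left(x+\tfrac{i}{N^2}\right)e^{-2\pi i (n+\delta)(x+\frac{i}{N^2})}\,dx .
\]
Dissect $[0,1)$ into the Ford/Farey arcs $\xi_{h,k}$ attached to the fractions $h/k$ of order $N$. On each arc, put $\tau=\frac1k(h+iz)$ with $z=k(N^{-2}-i\phi)$, where $\phi$ ranges over an interval $[-\theta'_{h,k},\theta''_{h,k}]$ of length $\asymp 1/(kN)$. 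Then insert the defining transformation of $f_{h,k}$, which replaces $f(\tau)$ by $(i/z)^{\kappa} f_{h,k}\bigl(\frac1k([-h]_k+\frac iz)\bigr)$.

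\textbf{Principal part and remainder.} Expand $f_{h,k}$ in its Fourier series $\sum_r a_{h,k}(r)q^{(r+\delta_{h,k})/c_k}$ and split it into two pieces:
\begin{itemize}
\item the finitely many terms with $r+\delta_{h,k}<0$ (finiteness uses that $\Gamma$ has finite index, so only finitely many distinct expansions occur);
\item the remainder, which is $O\bigl(e^{-2\pi\,\mathrm{Re}(1/z)\,\epsilon/(kc_k)}\bigr)$ for some fixed $\epsilon>0$ and hence is uniformly bounded, since $\mathrm{Re}(1/z)\ge k/2$ on Ford arcs.
\end{itemize}
The remainder contributes at most $\sum_{k\le N}\sum_h |z|^{-\kappa}\cdot(\text{arc length})\ll \sum_{k\le N}k\cdot (k/N)^{-\kappa}\cdot\frac1{kN}$. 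This tends to $0$ as $N\to\infty$ when $\kappa<0$.

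For each principal term, first extend the $\phi$-integral from the Farey interval to all of $\mathbb R$. The added tails are estimated exactly as in Rademacher's partition argument and also vanish as $N\to\infty$. Then substitute $w=z/k$ (or $s=1/z$). The completed integral becomes the classical Hankel loop integral
\[
\frac{1}{2\pi i}\int_{c-i\infty}^{c+i\infty} s^{-\kappa-1}\,e^{s+\frac{\beta}{s}}\,ds=\beta^{\frac{\kappa}{2}}\, I_{-\kappa}(2\sqrt{\beta})\quad(\text{suitably normalized}),
\]
which with $\beta=\frac{4\pi^2|r+\delta_{h,k}|(n+\delta)}{k^2c_k}$ yields exactly the factor $I_{1-\kappa}\bigl(\frac{4\pi}{k}\sqrt{|r+\delta_{h,k}|(n+\delta)/c_k}\bigr)$. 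The remaining phases are $e^{-2\pi i(n+\delta)h/k}$ from $e^{-2\pi i(n+\delta)\tau}$ and $e^{2\pi i(r+\delta_{h,k})[-h]_k/(kc_k)}$ from the transformed expansion, and the power factors $i^{\kappa}$ and $(n+\delta)^{(\kappa-1)/2}\bigl(|r+\delta_{h,k}|/c_k\bigr)^{(1-\kappa)/2}$ come out of the normalization. Letting $N\to\infty$ gives the stated series. Its convergence follows from $I_{1-\kappa}(x)\ll x^{1-\kappa}$ as $x\to0$, which makes the $k$-th term $\ll k^{\kappa-1}\cdot k\cdot k^{-(1-\kappa)}$ after the trivial bound on the $h$-sum.

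\textbf{Main obstacle.} I expect the weight $\kappa=0$ case to be the hardest step, and it is the one the paper needs for theta quotients. There the trivial bounds on the remainder and on the tail integrals give only $O(1)$, not $o(1)$, and the final series is not absolutely convergent. I would first handle this as Rademacher did for $j$: keep the $h$-sum together and exploit cancellation in the generalized Kloosterman sums
\[
\sum_{h} e^{-2\pi i\frac{(n+\delta)h}{k}+\frac{2\pi i (r+\delta_{h,k})[-h]_k}{kc_k}}
\]
arising from the arc endpoints. A nontrivial bound of the form $O(k^{1-\eta})$ for these sums (Weil/Kloosterman, adapted to the multiplier) makes all error terms $o(1)$ and the series converge. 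For $\kappa<0$ no such input is needed and the argument above goes through directly.
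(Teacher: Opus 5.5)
The paper does not prove this theorem. It quotes it from Rademacher--Zuckerman, Zuckerman and Bringmann--Ono, whose proofs use the circle method you outline, so your overall architecture is the standard one. As written, however, the treatment of the principal parts fails. You cannot extend the $\phi$-integral to all of $\mathbb{R}$. On the line $\mathrm{Re}(z)=k/N^2$ the factor $e^{2\pi|r+\delta_{h,k}|/(kc_kz)}$ tends to $1$, and $|e^{2\pi(n+\delta)z/k}|=e^{2\pi(n+\delta)/N^2}$ is constant. So the integrand has modulus $\asymp|z|^{-\kappa}$, and the integral diverges for every $\kappa\le 0$. Moreover, $s=1/z$ maps this line to a circle through $0$, not to a vertical line.

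What works is Rademacher's Ford-circle step. Move the symmetric segment between $z_{1,2}=\frac{k}{N^2}\mp\frac{i}{N}$, by Cauchy's theorem, onto the arc of the circle $\mathrm{Re}(1/z)=\mathrm{Re}(1/z_1)$, which is tangent to $i\mathbb{R}$ at $0$. Then complete it by the small arc through $0$, on which $\mathrm{Re}(\frac{1}{kz})\in[\frac12,1)$ and $|z|\le\sqrt2/N$. Only then does $s=1/z$ give a vertical line. Because $dz=-s^{-2}ds$, the resulting kernel is $s^{\kappa-2}e^{As+B/s}$, with $A=\frac{2\pi|r+\delta_{h,k}|}{kc_k}$ and $B=\frac{2\pi(n+\delta)}{k}$. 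This is absolutely integrable and yields $I_{1-\kappa}(2\sqrt{AB})$. Your kernel $s^{-\kappa-1}$ and your value $\beta^{\kappa/2}I_{-\kappa}(2\sqrt{\beta})$ have the wrong index, because the Jacobian was dropped.

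The same uniform bound $|z|\le\sqrt2/N$ (independent of $k$) is what makes the remainder $O(N^{\kappa})$. With your bound $(k/N)^{-\kappa}$, the displayed sum equals $N^{\kappa-1}\sum_{k\le N}k^{-\kappa}\asymp 1$, which does not tend to $0$. Similarly, the $k$-th term of the final series is $\ll k^{\kappa-1}$, not $k^{2\kappa-1}$, although your conclusion that the series converges absolutely exactly when $\kappa<0$ is right.

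The more serious gap is the case $\kappa=0$. This is the only case the paper uses, since $q^{-\frac{3\ell}{2m}}G_m^{\ell}$ and $q^{\frac{3\lambda}{2m}}\mathcal{G}_m^{\lambda}$ are modular functions. You leave it as a plan, and its key input is not available in the form you state.

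First, a bound $O(k^{1-\eta})$ for complete sums does not suffice by itself. The end pieces of the Farey arcs produce \emph{incomplete} sums over $h$, with ranges cut out by the neighbouring denominators. These require Rademacher's decomposition-and-completion argument from his paper on $J(\tau)$.

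Second, the constant terms with $r+\delta_{h,k}=0$, which you absorb into the ``uniformly bounded'' remainder, are only $O(1)$ trivially. They must also be shown to cancel over the arcs.

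Third, the statement allows an arbitrary finite-index $\Gamma$ and an arbitrary multiplier system. In that generality the dependence of $a_{h,k}(r)$, $\delta_{h,k}$ and the multiplier phases on $h$ is not of the algebraic shape to which Weil's bound applies. One needs a spectral input instead: Goldfeld--Sarnak/Pribitkin-type estimates $\sum_{c\le X}S(m,n;c)/c=O(X^{1-\eta})$ for the generalized Kloosterman sums. These are typically combined with Niebur's weight-$0$ Maass--Poincar\'e series rather than with Farey arcs.

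For congruence subgroups with characters, which covers the theta quotients in this paper, twisted Weil bounds do suffice. But that restriction then has to be built into your argument.
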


\subsection{The Jacobi theta function}\hspace{0 cm}
Recall the {\it Jacobi theta function} 
\begin{equation*}
\vartheta(w;\tau) := \sum_{n\in\mathbb{Z}+\frac12} q^{\frac{n^2}2} e^{2\pi in\left(w+\frac12\right)}\ \ \left(w\in \mathbb{C},\ q:=e^{2\pi i\tau}\right).
\end{equation*}
Let $z\in \mathbb{C}$ with $\text{Re}(z)>0$, $k\in\N$, $h\in\mathbb{Z}$ with $\gcd(h,k)=1$. Define $\omega_{h,k}$ by\footnote{For the explicit shape of $\omega_{h,k}$, see \cite[(5.2.4)]{AndrewsPartitions}. We however do not require it in this paper.}
\begin{equation*}
\eta\left(\frac1k(h+iz)\right) = e^{\frac{\pi i}{12k}(h-[-h]_k)} \omega_{h,k}^{-1} z^{-\frac12} \eta\left(\frac1k\left([-h]_k+\frac iz\right)\right).
\end{equation*}

Moreover, we require the following properties of $\vartheta(w;\t)$ from \cite[Lemma 2.1 (1), (2), (3)]{BHK}, \cite[Proposition 1.3 (7), (8)]{Zwegers}, and \cite{EZ}.

\begin{lemma}\label{lem:thetaprops}
Suppose that $w\in \mathbb{C}$ and $\t\in \mathbb{H}$.
	\noindent
	\noindent
	\begin{enumerate}
		\item
		Suppose that $z\in\mathbb{C}$ with $\textnormal{Re}(z)>0$, $k\in\N$, and $h\in\mathbb{Z}$ with $\gcd(h,k)=1$. Then we have 
		\begin{equation*}
		\hspace{0.6 cm}\vartheta\!\left(w;\frac1k(h+iz)\right) = e^{\frac{\pi i}{4k}\left(h-[-h]_k\right)} e^{-\frac{3\pi i}{4}} \omega_{h,k}^{-3} \sqrt{\frac iz} e^{-\frac{\pi  kw^2}z} \vartheta\left(\frac{iw}z;\frac1k\left([-h]_k+\frac iz\right)\right).
		\end{equation*}
		\item
		Suppose that $\lambda,\mu\in\mathbb{Z}$. Then, with $\z:=e^{2\pi i w}$, we have 
		\begin{equation*}
		\vartheta(w+\lambda\tau+\mu;\tau) = (-1)^{\lambda+\mu} q^{-\frac{\lambda^2}2} \zeta^{-\lambda} \vartheta(w;\tau).
		\end{equation*}
		\item  We have
		\[	\vartheta(w;\tau)=-iq^{\frac{1}{8}} e^{-\pi i w}\prod_{n\geq 1}\left(1-q^n\right)\left(1-e^{2\pi i w}q^{n-1}\right)\left(1-e^{-2\pi i w}q^{n}\right).\]
		\item We have
		\[
		\vartheta\left(\frac{w}{\t};-\frac 1\t\right)=-i\sqrt{-i\t}e^{\frac{\pi iw^2}{\t}}\vartheta\left(w;\t\right).
		\]
		\item For $\l\in \mathbb{Q}$, $q^{\frac{\l^2}{2}}\vartheta(\l \t;\t)$ is a modular form of weight $\frac 12$ (on some congruence subgroup).
	\end{enumerate}
\end{lemma}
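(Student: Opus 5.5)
The lemma collects standard facts, so the plan is to verify each item directly from the series definition of $\vartheta$, in the order (2), (3), (4), (1), (5), since the later items use the earlier ones.

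For (2), I will substitute $w\mapsto w+\lambda\tau+\mu$ into $\vartheta(w;\tau)=\sum_{n\in\mathbb{Z}+\frac12}q^{\frac{n^2}{2}}e^{2\pi i n(w+\frac12)}$. The shift by $\mu$ multiplies each term by $e^{2\pi i n\mu}=(-1)^{\mu}$, because $n\in\mathbb{Z}+\frac12$ gives $e^{2\pi i n\mu}=e^{\pi i\mu}$ for $\mu\in\mathbb{Z}$. The shift by $\lambda\tau$ turns the exponent $\frac{n^2}{2}+n\lambda$ into $\frac{(n+\lambda)^2}{2}-\frac{\lambda^2}{2}$. Reindexing $n\mapsto n-\lambda$ then produces the factor $q^{-\frac{\lambda^2}{2}}\zeta^{-\lambda}$, together with the sign $e^{-2\pi i\lambda\cdot\frac12}=(-1)^{\lambda}$ coming from the $+\frac12$ in $w+\frac12$.

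For (3), I will invoke the Jacobi triple product identity $\prod_{n\ge1}(1-q^n)(1+xq^{n-\frac12})(1+x^{-1}q^{n-\frac12})=\sum_{m\in\mathbb{Z}}x^mq^{\frac{m^2}{2}}$. Writing $n=m+\frac12$ and choosing $x=-e^{2\pi i w}q^{\frac12}$ converts the series for $\vartheta$ into this form. The prefactor $-iq^{\frac18}e^{-\pi i w}$ arises from completing the square, with $e^{\pi i n}=e^{\pi i m}\cdot i$ supplying the constant $-i$ up to an overall sign. That sign is then fixed by comparing the leading terms at $w\to 0$, or equivalently by checking the known identity $\vartheta'(0;\tau)=-2\pi\eta(\tau)^3$.

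For (4), I will apply Poisson summation to the Gaussian $x\mapsto e^{\pi i\tau x^2+2\pi i x(w+\frac12)}$ over the shifted lattice $\mathbb{Z}+\frac12$. Alternatively, one can note that both sides are entire in $w$ with the same quasi-periodicity, by (2), so their ratio is a theta function of degree zero. That ratio is constant in $w$, and the constant is computed at $w=0$ after differentiating, using $\eta(-\frac1\tau)=\sqrt{-i\tau}\,\eta(\tau)$.

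Item (1) is the main step. I will write $\tau=\frac1k(h+iz)$ and $\tau'=\frac1k([-h]_k+\frac iz)$, and use that $\tau'=\gamma\tau$ for $\gamma=\left(\begin{smallmatrix}[-h]_k & -\frac{h[-h]_k+1}{k}\\ k & -h\end{smallmatrix}\right)\in{\rm SL}_2(\mathbb{Z})$. By the same degree-zero-theta argument, $\vartheta(\frac{w}{c\tau+d};\gamma\tau)$ equals an $\epsilon(\gamma)(c\tau+d)^{\frac12}e^{\frac{\pi i c w^2}{c\tau+d}}\vartheta(w;\tau)$, for some eighth root of unity $\epsilon(\gamma)$. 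Here $c\tau+d=iz$, so $\frac{w}{c\tau+d}=\frac{w}{iz}$. Rescaling $w$ then gives the factor $e^{-\frac{\pi k w^2}{z}}$ and the argument $\frac{iw}{z}$. The multiplier is pinned down by differentiating at $w=0$ and using $\vartheta'(0;\tau)=-2\pi\eta(\tau)^3$: cubing the defining transformation of $\omega_{h,k}$ forces the factor $e^{\frac{\pi i}{4k}(h-[-h]_k)}\omega_{h,k}^{-3}$ and the remaining $e^{-\frac{3\pi i}{4}}\sqrt{\frac iz}$ (from $z^{-\frac32}$ versus the $z$ produced by the chain rule). Keeping the branches of the square roots consistent is the fiddly part, and I would check the final constant against $h=0$, $k=1$, where (1) must reduce to (4).

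For (5), with $\lambda\in\mathbb{Q}$, I will expand $q^{\frac{\lambda^2}{2}}\vartheta(\lambda\tau;\tau)=\sum_{n\in\mathbb{Z}+\frac12}e^{\pi i n}q^{\frac{(n+\lambda)^2}{2}}$. This is a unary theta series $\sum_{x\equiv a\,(N)}\psi(x)q^{\frac{x^2}{2N^2}}$ with a periodic coefficient function, once a common denominator $N$ is chosen. Such series are modular of weight $\frac12$ on $\Gamma(4N^2)$ by Shimura's theorem on theta series with congruence conditions. Alternatively, (1) and (2) together show the invariance under a congruence subgroup directly, while the cusp expansions show holomorphy at the cusps.
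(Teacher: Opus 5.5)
Your proposal is correct, but it follows a different route from the paper. The paper does not prove this lemma at all. It imports the five items from \cite{BHK}, \cite{Zwegers}, and \cite{EZ}; item (5) is the Eichler--Zagier fact that a Jacobi form evaluated at a torsion point $w=\lambda\tau+\mu$ ($\lambda,\mu\in\mathbb{Q}$), times $q^{m\lambda^2}$ with $m=\frac12$ the index, is modular on a congruence subgroup.

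You instead derive everything from the series definition. The payoff is clearest in (1). Comparing derivatives at $w=0$ and using $\vartheta'(0;\tau)=-2\pi\eta(\tau)^3$ (which follows from your (3)), you get the constant as $\frac zi\,\eta(\tau)^3/\eta(\tau')^3$. So $e^{\frac{\pi i}{4k}(h-[-h]_k)}\omega_{h,k}^{-3}$ is visibly the cube of the $\eta$-multiplier. With principal branches and $\mathrm{Re}(z)>0$ one indeed has $\frac zi\, z^{-\frac32}=-iz^{-\frac12}=e^{-\frac{3\pi i}{4}}\sqrt{\frac iz}$. This explains why the paper never needs the explicit shape of $\omega_{h,k}$.

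The citation route is shorter. For (5), your "alternative" via (1) and (2) is essentially the Eichler--Zagier argument. The Shimura unary-theta-series route is a genuinely different proof. The specific level $\Gamma(4N^2)$ you quote depends on normalizations and on the rescaling of $\tau$, but the statement only needs some congruence subgroup, with a multiplier.

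A few small points to tidy, none of them gaps:
\begin{enumerate}
\item In (1), $\frac{w}{c\tau+d}=\frac{w}{iz}=-\frac{iw}{z}$, so reaching the argument $\frac{iw}{z}$ uses the oddness $\vartheta(-w;\tau)=-\vartheta(w;\tau)$ rather than a rescaling. This is harmless, because you fix the constant afterwards by differentiating the final form.
\item In (3) there is no genuine sign ambiguity. After $n\mapsto -n$ and $x=-e^{-2\pi i w}q^{\frac12}$, the stated product comes out directly. With your choice $x=-e^{2\pi i w}q^{\frac12}$, you get prefactor $iq^{\frac18}e^{\pi i w}$ and factors $(1-\zeta q^n)(1-\zeta^{-1}q^{n-1})$; pulling out $1-\zeta^{-1}=-\zeta^{-1}(1-\zeta)$ gives the stated form.
\item For the ratio arguments in (4) and (1), the cleanest justification that the ratio is constant is that entire functions with the quasi-periodicity in (2) form a one-dimensional space. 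Equivalently, by (3), $\vartheta(\cdot;\tau)$ has only simple zeros, exactly at $\mathbb{Z}\tau+\mathbb{Z}$.
\end{enumerate}
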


Moreover, using \cite[Lemma 2.1 (3)]{BHK}, \Cref{lem:thetaprops} (3),\,(4), we have the following lemma.

\begin{lemma}\label{newlem}
	Suppose that $\t\in \mathbb{H}$ and $a,b\in \mathbb{R}$. 
	\noindent
	\noindent
	\begin{enumerate}
		\item If $0<a<1$, then we have, as $\t\to i\infty$,
		\[
		\vartheta(a\tau+b;\tau) = -ie^{-\pi ib} q^{\frac18-\frac a2} \left(1+O\left(q^{\min\{a,1-a\}}\right)\right).\]
		\item  For $b\notin\mathbb{Z}$, we have, as $\t\to i\i$,
		\[
		\vartheta\left(b;\t\right)\sim -2\sin\left(\pi b\right)e^{\frac{\pi i\t}{4}}.
		\]
	\item  If $0<b<1$, then we have, as $z\to 0$, 
	\[
	\vartheta\left(aiz+b;iz\right)\sim -\frac{e^{-2\pi iab+\pi ia}e^{\left(-b^2-\frac 14+b\right)\frac{\pi}{z}}}{\sqrt{z}}.
	\]
	\item For $a\notin\mathbb{Z}$, we have, as $z\to 0$, 
	\[
	\vartheta\left(a i z;iz\right)\sim -\frac{2i\sin(\pi a)}{\sqrt{z}}e^{-\frac{\pi}{4z}}.
	\]
	\end{enumerate}
\end{lemma}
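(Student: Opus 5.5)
All four parts follow from two ingredients: the product expansion in \Cref{lem:thetaprops}~(3) handles the behaviour as $\t\to i\i$, and the inversion formula \Cref{lem:thetaprops}~(4) transfers that behaviour to $z\to0$.

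\emph{Part (1).} Put $w=a\t+b$ and $\z=e^{2\pi i b}q^{a}$ in the product of \Cref{lem:thetaprops}~(3). The prefactor is
\[
-iq^{\frac18}e^{-\pi i(a\t+b)}=-ie^{-\pi ib}q^{\frac18-\frac a2}.
\]
I then expand the infinite product. The factors $\prod_{n\ge1}(1-q^n)$ contribute $1+O(q)$. The factor $1-\z q^{n-1}$ with $n=1$ equals $1-e^{2\pi ib}q^a=1+O(q^a)$, and the factors with $n\ge2$ are $1+O(q^{1+a})$. The factor $1-\z^{-1}q^{n}$ with $n=1$ equals $1-e^{-2\pi ib}q^{1-a}=1+O(q^{1-a})$, and those with $n\ge2$ are smaller still. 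Since $0<a<1$, all these error terms converge uniformly, so the product is $1+O(q^{\min\{a,1-a\}})$.

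\emph{Part (2).} This is the same computation with $w=b$ real. Here the $n=1$ factor $1-e^{2\pi ib}$ is a nonzero constant because $b\notin\mathbb{Z}$, and every other factor is $1+O(q)$. Hence
\[
\vartheta(b;\t)\sim -iq^{\frac18}\left(e^{-\pi ib}-e^{\pi ib}\right)=-iq^{\frac18}\left(-2i\sin(\pi b)\right)=-2\sin(\pi b)e^{\frac{\pi i\t}{4}}.
\]

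\emph{Parts (3) and (4).} I apply \Cref{lem:thetaprops}~(4) with $\t=\frac iz$, so that $-\frac1\t=iz$. I choose $w$ with $\frac w\t=aiz+b$, which gives $w=-a+\frac{ib}{z}$. The principal branch gives $\sqrt{-i\t}=z^{-\frac12}$ for $\mathrm{Re}(z)>0$, and $\frac{\pi i w^2}{\t}=\pi zw^2$. Therefore
\[
\vartheta(aiz+b;iz)=-iz^{-\frac12}e^{\pi z\left(a^2-\frac{2iab}{z}-\frac{b^2}{z^2}\right)}\vartheta\left(b\cdot\tfrac iz-a;\tfrac iz\right).
\]
As $z\to0$ we have $\frac iz\to i\i$. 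I apply part (1) with the roles $a\mapsto b$ and $b\mapsto -a$, which is allowed since $0<b<1$, and with $q=e^{-\frac{2\pi}{z}}$. This gives $-ie^{\pi ia}e^{-\frac{2\pi}{z}\left(\frac18-\frac b2\right)}$. Multiplying out:
\begin{itemize}
\item the constants combine as $(-i)(-i)=-1$;
\item $e^{\pi za^2}\to1$;
\item the remaining exponentials combine to $e^{-2\pi iab+\pi ia}e^{\left(-b^2-\frac14+b\right)\frac\pi z}$.
\end{itemize}
This is exactly (3).

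For (4) I take $b=0$, so $w=-a$. Then part (2), applied with $b\mapsto -a$ (valid since $a\notin\mathbb{Z}$), gives
\[
\vartheta(aiz;iz)\sim -iz^{-\frac12}\left(-2\sin(-\pi a)\right)e^{-\frac{\pi}{4z}}=-\frac{2i\sin(\pi a)}{\sqrt z}e^{-\frac\pi{4z}}.
\]

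\emph{Main obstacle.} The only real care needed is bookkeeping: the branch of the square root in \Cref{lem:thetaprops}~(4), and the exact cancellation of the exponent $-\frac{\pi b^2}{z}$ against $-\frac{2\pi}{z}\left(\frac18-\frac b2\right)$. One must also ensure that $z\to0$ is taken so that $\frac iz\to i\i$, for instance within a cone $|\arg z|<\frac\pi2-\varepsilon$. Then the error terms from part (1) indeed vanish.
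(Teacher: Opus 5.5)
Your proof is correct and follows essentially the same route as the paper. The paper's justification is a single sentence: it cites \cite[Lemma 2.1 (3)]{BHK} for part (1) and otherwise appeals to the product expansion and the inversion formula in \Cref{lem:thetaprops} (3), (4). You carry out exactly that decomposition, deriving (1) and (2) directly from the triple product rather than citing (1), and your exponent and branch bookkeeping for (3) and (4) checks out. Your remark that $z\to0$ must be taken so that $\mathrm{Re}(1/z)\to\infty$ (for example in a cone) is a worthwhile precision that the paper leaves implicit.
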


We also require the following asymptotics of the $I$-Bessel function.
\begin{lemma}\label{asymp}
	We have, as $x\to \infty$,
	\[
	I_1(x)\sim \frac{e^x}{\sqrt{2\pi x}}.
	\]	
\end{lemma}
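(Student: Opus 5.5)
This is a classical fact (it is the case $\nu=1$ of $I_\nu(x)\sim e^x/\sqrt{2\pi x}$), and I would prove it by Laplace's method applied to an integral representation. The starting point is Bessel's integral for integer order,
\[
I_1(x)=\frac1\pi\int_0^\pi e^{x\cos\theta}\cos\theta\, d\theta ,
\]
which I would justify from the series $I_1(x)=\sum_{j\ge0}\frac{(x/2)^{2j+1}}{j!(j+1)!}$. Expand $e^{x\cos\theta}$ in powers of $\cos\theta$ and integrate term by term; the odd moments $\frac1\pi\int_0^\pi\cos^{2j+2}\theta\,d\theta=\binom{2j+2}{j+1}2^{-2j-2}$ reproduce exactly those coefficients.

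For the asymptotics, the phase $\cos\theta$ has a unique maximum on $[0,\pi]$, at $\theta=0$. I fix a small $\varepsilon>0$ and split the integral at $\varepsilon$.

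On $[\varepsilon,\pi]$ the integrand is bounded by $e^{x\cos\varepsilon}$. That piece is therefore $O(e^{x(1-c\varepsilon^2)})$, which is exponentially smaller than $e^x/\sqrt x$.

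On $[0,\varepsilon]$ I write $\cos\theta=1-\frac{\theta^2}{2}+O(\theta^4)$ and substitute $\theta=t/\sqrt x$. This gives
\[
\frac{e^x}{\pi\sqrt x}\int_0^{\varepsilon\sqrt x} e^{-\frac{t^2}{2}+O\left(\frac{t^4}{x}\right)}\left(1+O\left(\frac{t^2}{x}\right)\right)dt .
\]
By dominated convergence this equals $\frac{e^x}{\pi\sqrt x}\left(\int_0^\infty e^{-t^2/2}dt+o(1)\right)$. The dominating function comes from the bound $\cos\theta\le1-\theta^2/\pi^2$ on $[0,\pi]$, which gives $e^{-t^2/\pi^2}$ after the substitution. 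The Gaussian integral is $\sqrt{\pi/2}$, so the main term is $\frac{e^x}{\pi\sqrt x}\sqrt{\pi/2}=\frac{e^x}{\sqrt{2\pi x}}$, as claimed.

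The only point needing care is uniformity in the Laplace step: the error terms $O(t^4/x)$ and $O(t^2/x)$ must be handled over the whole range $0\le t\le\varepsilon\sqrt x$. Choosing $\varepsilon$ small enough and using the global quadratic bound above handles this. Alternatively, one could simply cite the standard expansion $I_\nu(x)=\frac{e^x}{\sqrt{2\pi x}}\bigl(1+O(x^{-1})\bigr)$ from Watson's treatise.
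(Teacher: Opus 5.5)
Your proof is correct. The paper gives no proof of this lemma. It quotes $I_1(x)\sim e^x/\sqrt{2\pi x}$ as the classical $\nu=1$ case of the standard Bessel asymptotic and uses it as a black box in the main terms of Zuckerman's formula (Theorem~\ref{Thm:Zuckerman}).

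So the difference is that you supply a self-contained derivation and the paper relies on the literature. You use Bessel's integral $I_1(x)=\frac1\pi\int_0^\pi e^{x\cos\theta}\cos\theta\,d\theta$, checked termwise against the power series, and then Laplace's method. Your route removes the dependence on references at the cost of a few lines. Citing Watson, as you suggest at the end, also gives the sharper factor $1+O(x^{-1})$, which the paper does not need.

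Two small remarks:
\begin{itemize}
\item The moments $\frac1\pi\int_0^\pi\cos^{2j+2}\theta\,d\theta$ are the \emph{even} moments, not the odd ones; the odd powers of $\cos\theta$ integrate to zero.
\item The split at $\varepsilon$ is unnecessary. Since $1-\cos\theta=2\sin^2(\theta/2)\ge 2\theta^2/\pi^2$, the bound $\cos\theta\le 1-\theta^2/\pi^2$ holds on all of $[0,\pi]$. After substituting $\theta=t/\sqrt x$, you can apply dominated convergence directly on $[0,\pi\sqrt x]$, with dominating function $e^{-t^2/\pi^2}$ (using $|\cos\theta|\le 1$). Then the uniformity of the $O$-terms never has to be addressed.
\end{itemize}
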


\subsection{A Tauberian theorem} We require a modified version of Ingham's Tauberian Theorem \cite{I}, stated in \cite[Theorem 1.1]{BJM}. For $\Delta\geq0$, define
\begin{equation*}
R_\Delta := \{x+iy:\, x,y\in\R,\, x>0,\, |y|\le\Delta x\}.
\end{equation*}	
\begin{proposition}\label{Ingham}
	Let $B(q)=\sum_{n\ge0}b(n)q^n$ be a power series whose radius of convergence is at least one and assume that the $b(n)$ are non-negative and weakly increasing.
	Also suppose that $\nu$, $\b$, $\g\in\R$ with $\g>0$ exist such that\,\footnote{Throughout we use the principal branch to define the logarithm and, consequently, the square root.}
	\begin{equation}\label{Eqn1}
	B\left(e^{-t}\right) \sim \nu t^\b e^\frac\g t \quad\text{as } t \to 0^+,\qquad B\left(e^{-z}\right) \ll |z|^\b e^\frac{\g}{|z|} \quad\text{as } z \to 0,
	\end{equation}
	with the latter condition holding in each region\footnote{Throughout, we consider $z\to 0$ in $R_{\Delta}$ with $\Delta\ge 0$.} $R_\Delta$
	for $\Delta \geq 0$. Then we have
	\[
	b(n) \sim \frac{\nu\g^{\frac\b2+\frac14}}{2\sqrt\pi n^{\frac\b2+\frac34}}e^{2\sqrt{\g n}} \qquad\text{as}\  n \to \infty.
	\]
\end{proposition}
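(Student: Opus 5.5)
The plan is to pass from $b(n)$ to its summatory function, prove an asymptotic for the summatory function by a Laplace-inversion (saddle point) argument, and then recover $b(n)$ from it using monotonicity. Put $A(x):=\sum_{0\le n\le x}b(n)$. For $\mathrm{Re}(z)>0$ we have the Laplace--Stieltjes representation
\[
B\left(e^{-z}\right)=\int_0^\infty e^{-zu}\,dA(u)=z\int_0^\infty A(u)e^{-zu}\,du.
\]
So the Laplace transform $\int_0^\infty A(u)e^{-zu}du=B(e^{-z})/z$ satisfies \eqref{Eqn1} with $\beta$ replaced by $\beta-1$: it is asymptotic to $\nu t^{\beta-1}e^{\frac{\gamma}{t}}$ on the positive real axis, and it is $\ll |z|^{\beta-1}e^{\frac{\gamma}{|z|}}$ in every cone $R_\Delta$.

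Step 1 is an Ingham-type asymptotic for $A$. I would show that
\[
A(x)\sim \frac{\nu\gamma^{\frac{\beta}{2}-\frac14}}{2\sqrt{\pi}}x^{-\frac{\beta}{2}+\frac14}e^{2\sqrt{\gamma x}}\qquad (x\to\infty).
\]
The saddle point of $e^{xz+\frac{\gamma}{z}}$ sits at $t_x:=\sqrt{\gamma/x}$. Since $A\ge0$ is nondecreasing (because $b(n)\ge0$), one can replace the sharp cutoff by a smoothed one. Concretely, I would compare $A(x)$ with $\int A(u)K_\varepsilon(u-x)\,du$ for a nonnegative kernel $K_\varepsilon$ of width $\varepsilon/t_x$ whose Fourier transform has compact support. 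The smoothed quantity can be written as a vertical contour integral on $\mathrm{Re}(z)=t_x$ over a window $|\mathrm{Im}(z)|\le \Delta t_x$, which lies inside $R_\Delta$. On that window the real-axis asymptotic, extended by the cone bound, lets one evaluate the integral as a Gaussian around the saddle, and this produces the stated main term. Monotonicity of $A$ then lets one sandwich $A(x)$ between the smoothed values at $x\pm\varepsilon/t_x$, and letting $\varepsilon\to0$ completes Step 1.

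Step 1 is the main obstacle. The difficulty is that \eqref{Eqn1} gives an asymptotic only for real $t$, while off the real axis there is merely an upper bound of the same exponential size $e^{\gamma/|z|}$. Such a bound does not on its own make the off-axis contribution negligible. The way around this that I would try first is Ingham's original device. Positivity of the coefficients gives $|B(e^{-z})|\le B(e^{-\mathrm{Re}(z)})$, and one combines this with the fact that $\mathrm{Re}(1/z)<1/\mathrm{Re}(z)$ away from the real axis. The contribution from $|\mathrm{Im}(z)|\asymp \Delta t_x$ then carries a factor $e^{-c\Delta^2\sqrt{\gamma x}}$ relative to the main term, because $\gamma\,\mathrm{Re}(1/z)$ drops by about $\gamma\Delta^2/t_x$ there. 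The cone bound is what controls the region of intermediate size, where the gain from $\mathrm{Re}(1/z)$ has to be weighed against the polynomial factor $|z|^{\beta-1}$.

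Step 2 recovers $b(n)$ from $A$ via monotonicity. Let $h=\varepsilon\sqrt n$. Since $b$ is weakly increasing,
\[
\frac{A(n)-A(n-h)}{h}\le b(n)\le \frac{A(n+h)-A(n)}{h}.
\]
By Step 1, $A(n\pm h)=A(n)e^{\pm h\sqrt{\gamma/n}+O(\varepsilon^2)}(1+o(1))$. Hence both difference quotients equal $A(n)\sqrt{\gamma/n}\,(1+O(\varepsilon)+o(1)/\varepsilon)$. Letting $\varepsilon\to0$ slowly, so that the $o(1)/\varepsilon$ term still vanishes, gives $b(n)\sim\sqrt{\gamma/n}\,A(n)$. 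This is exactly
\[
b(n)\sim\frac{\nu\gamma^{\frac{\beta}{2}+\frac14}}{2\sqrt\pi\, n^{\frac\beta2+\frac34}}e^{2\sqrt{\gamma n}}.
\]
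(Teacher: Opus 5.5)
The paper gives no proof of this proposition; it is quoted from \cite[Theorem 1.1]{BJM}, so your argument has to stand on its own. Its architecture is sound: Laplace inversion for $A$, smoothing, de-smoothing by monotonicity, then differencing. Step 2 is correct. Step 1, however, has a genuine gap exactly where you flag the difficulty.

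To evaluate the smoothed integral as a Gaussian you need $B(e^{-z})=\nu z^{\beta}e^{\gamma/z}(1+o(1))$ \emph{uniformly} for $z=t_x+iy$ with $|y|\le Kt_x^{3/2}$. Writing $y=t_x^{3/2}\eta$, one has $\gamma/z+xz=2\sqrt{\gamma x}-\gamma\eta^2+o(1)$, so this shrinking window carries the whole main term. Neither hypothesis gives an asymptotic there.

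The cone bound, whose constant $C_\Delta$ is unknown, only disposes of the range $Kt_x^{3/2}\le|y|\le\Delta t_x$. There $\gamma/|z|\le\gamma/t_x-c_\Delta\gamma\eta^2$, so this range contributes $O(C_\Delta e^{-c_\Delta\gamma K^2})$ relative to the main term.

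The device you propose does not do what you claim. The bound $|B(e^{-z})|\le B(e^{-\mathrm{Re}(z)})$ gives no decay at all off the axis. The drop of $\gamma\,\mathrm{Re}(1/z)$ below $\gamma/\mathrm{Re}(z)$ is a property of the model $\nu z^{\beta}e^{\gamma/z}$, which you do not yet know approximates $B$ there. The factor $e^{-c\Delta^2\sqrt{\gamma x}}$ you quote can only come from the cone bound, and nothing in your plan produces the asymptotic near the saddle.

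The missing idea is a normal-families step, and this is where your inequality $\mathrm{Re}(1/z)<1/\mathrm{Re}(z)$ actually enters. Put $z=t+t^{3/2}\zeta$ and $\psi_t(\zeta):=B(e^{-z})/(\nu z^{\beta}e^{\gamma/z})$. Positivity and the real-axis asymptotic give
$|\psi_t(\zeta)|\le(1+o(1))e^{\gamma(1/\mathrm{Re}(z)-\mathrm{Re}(1/z))}=(1+o(1))e^{\gamma(\mathrm{Im}\,\zeta)^2+o(1)}$,
so $\{\psi_t\}$ is locally bounded in $\zeta$. Since $\psi_t\to 1$ for real $\zeta$, Vitali's theorem gives $\psi_t\to1$ uniformly on compact sets. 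This is exactly the input the saddle-point evaluation needs.

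Two smaller corrections. First, a vertical integral on $\mathrm{Re}(z)=t_x$ with a compactly supported $\widehat{K}$ inserted computes $e^{t_xx}\int A(u)e^{-t_xu}K(x-u)\,du$, not $\int A(u)K(u-x)\,du$. The latter has no such representation, since $K$ cannot decay exponentially. So the object you smooth and then de-smooth must be the tilted function $A(u)e^{-t_xu}\le B(e^{-t_x})$.

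Second, the power of $x$ in your formula for $A$ is wrong. The saddle-point computation gives $A(x)\sim\frac{\nu\gamma^{\frac{\beta}{2}-\frac14}}{2\sqrt\pi}x^{-\frac{\beta}{2}-\frac14}e^{2\sqrt{\gamma x}}$. With your $x^{-\frac{\beta}{2}+\frac14}$, the quantity $\sqrt{\gamma/n}\,A(n)$ exceeds the target by a factor $\sqrt n$, so the closing ``this is exactly'' does not follow as written. With the corrected exponent it does.
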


Moreover, we require the asymptotic behavior of $(q)_\infty$ as $q\to 1$.
\begin{lemma}\label{L:q-Pochhammer asymptotics}
	We have, with $q=e^{-z}$,
	\begin{align*}
	\left( q \right)_\infty &\sim \sqrt{\dfrac{2\pi}{z}} e^{- \frac{\pi^2}{6z}}\quad (\text{as}\ z\to 0).
	\end{align*}
\end{lemma}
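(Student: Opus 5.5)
This is the classical asymptotic $(q)_\infty\sim\sqrt{2\pi/z}\,e^{-\pi^2/(6z)}$, and I will derive it from the modular transformation of the Dedekind eta function, $\eta(\tau)=q^{1/24}(q)_\infty$. I take $\tau=\frac{iz}{2\pi}$, so that $e^{2\pi i\tau}=e^{-z}=q$, and let $z\to0$ with $\mathrm{Re}(z)>0$ (inside each cone $R_\Delta$, as elsewhere in the paper).

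The key input is $\eta(-1/\tau)=\sqrt{-i\tau}\,\eta(\tau)$. This is the case $h=0$, $k=1$ of the transformation defining $\omega_{h,k}$, where $\omega_{0,1}=1$ can be checked directly. One can also obtain it from Lemma \ref{lem:thetaprops} (4) via $\vartheta'(0;\tau)=-2\pi\eta(\tau)^3$, which follows from Lemma \ref{lem:thetaprops} (3). Substituting $\tau=\frac{iz}{2\pi}$ gives $-1/\tau=\frac{2\pi i}{z}$ and $\sqrt{-i\tau}=\sqrt{z/(2\pi)}$, with the principal branch, which is valid because $\mathrm{Re}(z)>0$.

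Writing $\eta$ as a product on both sides, I get
\[
e^{-\frac{z}{24}}(q)_\infty=\sqrt{\frac{2\pi}{z}}\,e^{-\frac{4\pi^2}{24 z}}\prod_{n\ge1}\left(1-e^{-\frac{4\pi^2 n}{z}}\right),
\]
that is,
\[
(q)_\infty=\sqrt{\frac{2\pi}{z}}\,e^{-\frac{\pi^2}{6z}+\frac{z}{24}}\prod_{n\ge1}\left(1-e^{-\frac{4\pi^2 n}{z}}\right).
\]

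It remains to show that the last two factors tend to $1$. Clearly $e^{z/24}\to1$. For the product, I need $\mathrm{Re}(1/z)\to\infty$. In $R_\Delta$ we have $\mathrm{Re}(1/z)=\frac{x}{x^2+y^2}\ge\frac{1}{(1+\Delta^2)x}\to\infty$, so $|e^{-4\pi^2/z}|\to0$. Then the logarithm of the product is $O\bigl(|e^{-4\pi^2/z}|\bigr)$, and so the product is $1+o(1)$.

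The only delicate step is this last one: the asymptotic needs $\mathrm{Re}(1/z)\to\infty$. For real $z\to0^+$ this is automatic, and in each region $R_\Delta$ it follows from the bound above, which is exactly the range in which Lemma \ref{L:q-Pochhammer asymptotics} is used later.
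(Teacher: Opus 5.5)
Your proof is correct: the transformation $\eta(-1/\tau)=\sqrt{-i\tau}\,\eta(\tau)$ at $\tau=\frac{iz}{2\pi}$ gives the exact identity $(q)_\infty=\sqrt{\frac{2\pi}{z}}\,e^{-\frac{\pi^2}{6z}+\frac{z}{24}}\prod_{n\ge1}\bigl(1-e^{-\frac{4\pi^2 n}{z}}\bigr)$, and your bound $\mathrm{Re}\bigl(\frac1z\bigr)\ge\frac{1}{(1+\Delta^2)x}$ is exactly what makes the product tend to $1$ in each cone $R_\Delta$. The paper gives no proof of this lemma and treats it as a standard fact; your derivation is the standard one it implicitly relies on. One small caveat: the aside via $\vartheta'(0;\tau)=-2\pi\eta(\tau)^3$ only determines $\eta^3$, so there you would also need to fix the cube root of unity, e.g.\ by continuity and evaluating at $\tau=i$.
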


\section{Proof of \Cref{A} and \Cref{BenCor}}\label{sec2}

In this section, we consider the case that $m$ is odd. For $m\in \mathbb{N}_{> 3}$ and $\ell\in \mathbb{N}$, we study the behavior of $G^{\ell}_m(\t)$, where 
\[
G_m(\t):=\frac{\vartheta\left(\t;m\t\right)}{\vartheta\left(2\t;m\t\right)}=q^{\frac 12}F_m(q),
\]
as $\tau\to \frac hk\in \mathbb{Q}$. For $a\in \mathbb{R}$, we write $a=\lfloor a\rfloor+\{a\}$. We have the following lemma (for general $m$). For $m\in \mathbb{N}_{>3}$, $\ell,k\in \mathbb{N}$, $d:=\gcd(m,k)$ with $\mu:=\frac md$, and $\kappa:=\frac kd$, define   
\begin{equation*}
\mathcal{A}_{m,\ell}(h,k):=\frac{\pi \ell}{\mu\kappa}\left(\left\{\frac{\mu h}{m}\right\}-\left\{\frac{\mu h}m\right\}^2-\left\{\frac{2\mu h}m\right\}+\left\{\frac{2\mu h}m\right\}^2\right).
\end{equation*}

\begin{lemma}\label{Fin1}
	Let $m\in \mathbb{N}_{>3}$ and $\ell,k\in \mathbb{N}$. For $0\le h<k$ with $\gcd(h,k)=1$, we have, as $z\to 0$, 
	\begin{align*}
G^{\ell}_m\left(\frac{h}{k}+\frac{iz}{k}\right)&=e^{\frac{\mathcal{A}_{m,\ell}(h,k)}{z}+O_{h,k}(1)}.
	\end{align*}
\end{lemma}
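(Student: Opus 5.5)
The plan is to reduce each theta factor of $G_m=\vartheta(\tau;m\tau)/\vartheta(2\tau;m\tau)$ to a form where Lemma \ref{lem:thetaprops} (1) applies directly, and then read off the exponential growth from Lemma \ref{newlem} (1), (2). Since $G_m^\ell$ is just the $\ell$-th power, it suffices to treat $\ell=1$ and to show
\[
\vartheta(j\tau;m\tau)=e^{\frac{\pi}{\mu\kappa z}\left(\left\{\frac{j\mu h}{m}\right\}-\left\{\frac{j\mu h}{m}\right\}^2-\frac14\right)+O_{h,k}(1)}\qquad (j\in\{1,2\})
\]
for $\tau=\frac1k(h+iz)$. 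Taking the quotient of the cases $j=1$ and $j=2$ cancels the $-\frac14$ and gives exactly $\mathcal{A}_{m,1}(h,k)/z$.

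\textbf{Step 1: setting up the modular transformation.} Put $\tau':=m\tau=\frac1\kappa(h'+iz')$ with $h':=\mu h$ and $z':=\mu z$. Since $\gcd(\mu,\kappa)=1$ and $\kappa\mid k$, we have $\gcd(h',\kappa)=1$. Moreover $j\tau=\frac jm\tau'$, so Lemma \ref{lem:thetaprops} (1) applies with $w=\frac jm\tau'$. This yields the factor $e^{-\frac{\pi\kappa w^2}{z'}}$ times $\vartheta(\frac{iw}{z'};\tau'')$, where $\tau'':=\frac1\kappa([-h']_\kappa+\frac{i}{z'})$.

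Using $\frac{i}{\kappa z'}=\tau''-\frac{[-h']_\kappa}{\kappa}$, one rewrites $\frac{iw}{z'}=a\tau''+b$ with
\[
a:=\frac{jh'}{m}=\frac{j\mu h}{m},
\]
and with $b\in\mathbb{Q}$ depending only on $h,k,j,m$. Next apply Lemma \ref{lem:thetaprops} (2) with $\lambda:=\lfloor a\rfloor$ to shift $a$ to $\{a\}\in[0,1)$. This produces an explicit factor $q''^{-\lambda^2/2}\zeta^{-\lambda}$.

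\textbf{Step 2: extracting the growth.} Two cases arise.
\begin{itemize}
\item If $\{a\}\neq 0$, Lemma \ref{newlem} (1) gives $\vartheta(\{a\}\tau''+b';\tau'')=c\, q''^{\frac18-\frac{\{a\}}2}(1+o(1))$ with $c\neq 0$.
\item If $\{a\}=0$, Lemma \ref{newlem} (2) gives $\sim -2\sin(\pi b')q''^{\frac18}$. Here $b'\notin\mathbb{Z}$ is forced: otherwise the right-hand side would vanish identically in $z$, whereas $\vartheta(j\tau;m\tau)\neq0$ because $m\nmid j$ (the product in Lemma \ref{lem:thetaprops} (3) has no zero factor).
\end{itemize}
Now $|q''|=e^{-\frac{2\pi}{\kappa z'}}$, and all remaining factors are of the form $e^{c/z'+O(1)}$ with $c$ real. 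Collecting the $1/z'$-terms means combining:
\begin{itemize}
\item the term $-\frac{\pi j^2h'^2}{m^2\kappa z'}$ coming from $e^{-\pi\kappa w^2/z'}$;
\item the term $\frac{\pi\lambda^2}{\kappa z'}$ from the shift;
\item the term $-\frac{2\pi}{\kappa z'}\left(\frac18-\frac{\{a\}}2\right)$ from the leading term.
\end{itemize}
Using $a=\lambda+\{a\}$, these sum to $\frac{\pi}{\kappa z'}(\{a\}-\{a\}^2-\frac14)$. The constants, the roots of unity and $\sqrt{i/z}$ are $O_{h,k}(1)$ in the exponent. One must be careful here: $\sqrt{i/z}$ and the $e^{\pi i(\cdot)}$-factors are not $O(1)$ in absolute value exponent if $z\to0$ tangentially. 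Since the lemma is stated as $z\to0$ with $\operatorname{Re}(z)>0$, I would interpret the $O(1)$ as allowing a $\log z$-term, or restrict to $z\to0$ in cones $R_\Delta$, noting that the two $\sqrt{i/z}$ factors cancel in the quotient anyway.

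\textbf{Main obstacle.} The hardest part is the sign and phase bookkeeping in Step 1: determining $b$ explicitly enough to apply Lemma \ref{newlem}, and verifying that the imaginary parts of the $1/z'$-exponents cancel. Imaginary parts arise from cross terms in $w^2$ and from the $e^{2\pi i[-h']_\kappa(\cdot)/\kappa}$ part of $q''$, and these are really constants rather than $1/z$-terms. The degenerate case $\{a\}=0$ needs the separate nonvanishing argument above.
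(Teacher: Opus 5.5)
Your proposal is correct and is essentially the paper's proof. The paper applies Lemma \ref{lem:thetaprops} (1), (2) and Lemma \ref{newlem} (1), (2) to $g_m(\tau)=G_m(\tau/m)$ at a general point $\frac1k(h+iz)$, and only at the end substitutes $h\mapsto\mu h$, $k\mapsto\kappa$, $z\mapsto\mu z$; you make that substitution first. Your nonvanishing argument forcing $b\notin\mathbb{Z}$ when $\{a\}=0$ fills a point the paper passes over silently.

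There is one bookkeeping slip in Step 2. The factor $\zeta^{-\lambda}=e^{-2\pi i\lambda(\{a\}\tau''+b)}$ produced by the shift contributes $\frac{2\pi\lambda\{a\}}{\kappa z'}$, and this term is missing from your list. Without it, your three listed terms sum to $\frac{\pi}{\kappa z'}(\{a\}-\{a\}^2-2\lambda\{a\}-\frac14)$, not to the expression you claim. The paper records the full shift as $q^{-\frac{\lfloor a\rfloor^2}{2}-\lfloor a\rfloor\{a\}}$. Once the missing term is included, the sum is exactly $\frac{\pi}{\kappa z'}(\{a\}-\{a\}^2-\frac14)$, and the rest of your argument goes through.
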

\begin{proof}
	It is enough to prove the claim for $\ell=1$. Indeed, the general case follows by taking the $\ell$-th power. Define
	\begin{equation}\label{theta}
	g_m(\t):=G_m\left(\frac{\t}{m}\right)=\frac{\vartheta\left(\frac{\t}m;\t\right)}{\vartheta\left(\frac{2\t}{m};\t\right)}.
	\end{equation}
 Then
	\begin{align}\label{c1}
	\text{Log}\left(g_m\left(\frac 1k\left(h+iz\right)\right)\right)
	&\equiv 
	\sum_{j\in \{0,1\}}(-1)^j \text{Log}\left(\vartheta\left(\frac{j+1}{mk}\left(h+iz\right);\frac 1k\left(h+iz\right)\right)\right),
	\end{align}
where, here and throughout, we use $\equiv$ to denote that the identity holds modulo $2\pi i\mathbb{Z}$. Now by \Cref{lem:thetaprops} (1) with $w=\frac{j+1}{mk}(h+iz)$ ($j\in \{0,1\}$), we have, as $z\to 0$, 
	\begin{align*}
	&\text{Log}\left(\vartheta\left(\frac{j+1}{mk}\left(h+iz\right);\frac 1k\left(h+iz\right)\right)\right)\\
	&\hspace{0.5 cm}\equiv-\frac{\text{Log}(z)}{2}-\frac{\pi\left(j+1\right)^2h^2}{m^2kz}
	+\text{Log}\left(\vartheta\left(\frac{i(j+1)h}{mkz}-\frac{j+1}{mk};\frac 1k\left(\left[-h\right]_k+\frac iz\right)\right)\right)+O_{h,k}(1).
	\end{align*}
	Plugging this into \eqref{c1}, we obtain, as $z\to 0$,
	\begin{align}\label{c2}
	&\text{Log}\left(g_m\left(\frac 1k\left(h+iz\right)\right)\right)\nonumber\\
	&\hspace{2 cm}\equiv\frac{3\pi h^2}{m^2kz}
	+\sum_{j\in\{0,1\}}(-1)^j\text{Log}\left(\vartheta\left(\frac{i(j+1)h}{mkz}-\frac{j+1}{mk};\frac 1k\left(\left[-h\right]_k+\frac iz\right)\right)\right)
	+O_{h,k}(1)\nonumber\\
	&\hspace{2 cm}=\frac{3\pi h^2}{m^2kz}\!+\!\!\!\sum_{j\in\{0,1\}}\!\!(-1)^j\text{Log}\left(\vartheta\left((j+1)\left(\frac{h}{m}\t_1+\a_1\right);\t_1 \right) \right)+O_{h,k}(1),
	\end{align}
	where 
	\[
	\a_1:=-\frac{h\left[-h\right]_k+1}{mk}\quad \text{and}\quad \t_1:=\frac 1k\left(\left[-h\right]_k+\frac iz\right).
	\]
	By \Cref{lem:thetaprops} (2) with $\l=\lfloor a\rfloor,\ \mu=0,\ w=\{a\}\t+b$, we have 
	\begin{align*}
	\vartheta\left(a\t+b;\t\right)
	&=(-1)^{\lfloor a\rfloor}e^{-2\pi i\lfloor a\rfloor b} q^{-\frac{\lfloor a\rfloor^2}{2}-\lfloor a\rfloor \{a\}}\vartheta\left(\{a\}\hspace{-0.01 cm}\t+b;\t\right).
	\end{align*}
	Applying this with $a=\frac hm$, $b=\a_1$, and $\t= \t_1$, we obtain, 
	\begin{align}\label{c3}
	&\text{Log}\left(\vartheta\left(\left(\left\lfloor\frac hm\right\rfloor+\left\{\frac hm\right\}\right)\t_1+\a_1;\t_1\right)\right)\nonumber\\
	&\hspace{1 cm}\equiv-\!\pi i\left\lfloor\frac hm\right\rfloor^2\t_1\!-\!2\pi i\left\lfloor\frac hm\right\rfloor\left\{\frac hm\right\}\t_1+\!\text{Log}\left(\vartheta\left(\left\{\frac hm\right\}\t_1\!+\!\a_1;\t_1\right)\right)
	+O_{h,k}(1).
	\end{align}
	Now, by \Cref{newlem} (1),\,(2) with $a=\{\frac hm\}$, $b=\a_1$, and $\t= \t_1$, we have, 
	\[
	\text{Log}\left(\vartheta\left(\left\{\frac hm\right\}\t_1+\a_1;\t_1\right)\right)\equiv\frac{\pi i}{4}\t_1-\pi i\left\{\frac hm\right\}\t_1+O_{h,k}(1)\ \ (\text{as}\ z\to 0).
	\]
	Plugging this into \eqref{c3}, we obtain, as $z\to 0$, 
	\begin{align}\label{c4}
	&\text{Log}\left(\vartheta\left(\left(\left\lfloor\frac hm\right\rfloor+\left\{\frac hm\right\}\right)\t_1+\a_1;\t_1\right)\right)\nonumber\\
	&\hspace{5.5 cm}
	\equiv\frac{\pi i}{4}\t_1+\pi \left(\frac{h^2}{m^2}+\left\{\frac hm\right\}-\left\{\frac hm\right\}^2\right)\frac{1}{kz}+O_{h,k}(1).
	\end{align}
	In the same way, we have, as $z\to 0$,
	\begin{align*}
	&\text{Log}\left(\vartheta\left(\left(\left\lfloor\frac{2h}m\right\rfloor+\left\{\frac{2h}m\right\}\right)\t_1+2\a_1;\t_1\right)\right)\\
	&\hspace{6 cm}
	\equiv\frac{\pi i}{4}\t_1+\pi \left(\frac{4h^2}{m^2}+\left\{\frac{2h}m\right\}-\left\{\frac{2h}m\right\}^2\right)\frac{1}{kz}
	+O_{h,k}(1).
	\end{align*}
	Plugging this and \eqref{c4} into \eqref{c2}, we obtain, as $z\to 0$, 
	\begin{align*}
	\text{Log}\left(g_m\left(\frac 1k\left(h+iz\right)\right)\right)&\equiv\frac{\pi }{kz}\left(\left\{\frac{h}m\right\}-\left\{\frac{h}m\right\}^2-\left\{\frac{2h}m\right\}+\left\{\frac{2h}m\right\}^2\right)
	+O_{h,k}(1).
	\end{align*}
		Exponentiating and using \eqref{theta} with $\t\mapsto m\t$, $m=d\mu$, and $k=d\kappa$, we conclude the proof.\qedhere   
\end{proof}

Next, we maximize $|G^{\ell}_m(\frac 1k(h+iz))|$. 

\begin{lemma}\label{lem1}
	For $m\in \mathbb{N}_{>3}$ odd and $\ell\in \mathbb{N}$, $|G^{\ell}_m(\frac 1k(h+iz))|$ is maximal, as $z\to 0$, for $\frac hk\in \{\frac{m-1}{2m},\frac{m+1}{2m}\}$. 
\end{lemma}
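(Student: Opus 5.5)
The plan is to reduce the statement to an elementary maximization. \Cref{Fin1} gives $\log|G^{\ell}_m(\frac1k(h+iz))|=\mathrm{Re}(\mathcal{A}_{m,\ell}(h,k)/z)+O_{h,k}(1)$. So for $z\to0$ in a cone $R_\Delta$ (in particular for real $z\to0^+$), the size of $|G_m^\ell|$ near $\frac hk$ is governed by the real number $\mathcal{A}_{m,\ell}(h,k)$. We therefore need to show that $\mathcal{A}_{m,\ell}(h,k)$ is largest exactly for $\frac hk\in\{\frac{m-1}{2m},\frac{m+1}{2m}\}$.

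First I simplify the argument of $\mathcal{A}_{m,\ell}$. Since $\mu=\frac md$, we have $\frac{\mu h}{m}=\frac hd$. Since $d\mid k$ and $\gcd(h,k)=1$, also $\gcd(h,d)=1$. Hence $x:=\{\frac hd\}=\frac ad$ with $\gcd(a,d)=1$, where $d$ is an (odd) divisor of $m$. Writing $f(y):=\{y\}-\{y\}^2$ and $g(x):=f(x)-f(2x)$, we get
\[
\mathcal{A}_{m,\ell}(h,k)=\frac{\pi\ell}{\mu\kappa}\,g(x),\qquad \mu\kappa=\frac{mk}{d^2}\ge \frac md,
\]
with equality in the last inequality iff $k=d$.

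Next I compute $g$ piecewise. For $0\le x\le\frac12$ one has $g(x)=3x^2-x$, and for $\frac12\le x<1$ one has $g(x)=(1-x)(2-3x)$. In particular $g(x)=g(1-x)$, and $g$ is convex on $[0,\frac12]$. Moreover $g(x)>0$ on $(0,\frac12]$ only for $x>\frac13$. For $x=\frac ad$ with $d$ odd, the admissible points closest to $\frac12$ are $\frac{d\pm1}{2d}$. Convexity, together with the fact that $g$ increases on $(\frac13,\frac12]$, shows that these points give the maximal value
\[
g\left(\frac{d-1}{2d}\right)=\frac{(d-1)(d-3)}{4d^2}.
\]
This quantity is $\le 0$ for $d\in\{1,3\}$.

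Combining the two steps gives
\[
\mathcal{A}_{m,\ell}(h,k)\le \frac{\pi\ell d}{m}\cdot\frac{(d-1)(d-3)}{4d^2}=\frac{\pi\ell(d-1)(d-3)}{4md}.
\]
The right-hand side is strictly increasing in $d\ge3$, so it is maximized at $d=m$. Then equality forces $k=d=m$ and $x=\frac hm\in\{\frac{m\pm1}{2m}\}$. The maximum value is $\frac{\pi\ell(m-1)(m-3)}{4m^2}>0$, since $m>3$; this is consistent with the exponent in \Cref{A}. Every other $\frac hk$ gives a strictly smaller, possibly nonpositive, value.

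The main obstacle is the bookkeeping in the reduction to $x=\frac ad$. One has to check carefully that the weight $\frac1{\mu\kappa}$ and the point $x$ interact as claimed, i.e., that the bound $\frac dm$ for $\frac{1}{\mu\kappa}$ and the bound for $g$ can be attained simultaneously only when $k=d=m$. One must also check the strictness of the inequalities, so that the two maximizers are unique. The computation of $g$ itself is routine.
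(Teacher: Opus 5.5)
Your proposal is correct and follows the paper's proof essentially step by step: reduce via Lemma~\ref{Fin1} to maximizing $\frac{d^2}{mk}\,g(\{\frac hd\})$, use the symmetry $x\mapsto 1-x$ (the paper's substitution $j\mapsto d-j$), maximize at $x=\frac{d\pm1}{2d}$ with value $\frac{(d-1)(d-3)}{4d^2}$, then take $k=d$ and finally $d=m$. Your convexity and positivity remarks and your strictness check only make explicit what the paper leaves implicit.
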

\begin{proof}
Again we may assume $\ell=1$. By \Cref{Fin1}, as $z\to 0$, $|G_m(\frac 1k(h+iz))|$ is maximal for the maximal value of 
	\begin{equation}\label{Max}
	\begin{cases}
	\frac{1}{\mu\kappa}\left(3\left\{\frac{\mu h}m\right\}^2-\left\{\frac{\mu h}m\right\}\right) &\text{if}\quad 0\le \left\{\frac{\mu h}m\right\}<\frac 12,\\[0.8 em]
	\frac{1}{\mu\kappa}\left(3\left\{\frac{\mu h}m\right\}^2-5\left\{\frac{\mu h}m\right\}+2\right) &\text{if}\quad \frac 12\le \left\{\frac{\mu h}m\right\}<1.
	\end{cases}
	\end{equation}
	Let $d\mid m$. If $d=1$, then $\mu=m$, $\kappa=k$, and $\{\frac{\mu h}{m}\}=\{h\}=0$. So, by \Cref{Fin1}, $G_m(\frac 1k(h+iz))=O_{h,k}(1)$, as $z\to 0$. We next assume $d>1$. Plugging in $\mu=\frac md$ and $\kappa=\frac kd$, by \eqref{Max}, we need to maximize (note that $d$ is odd since $m$ is odd) 
	\begin{align*}
	&\begin{cases}
	\frac{d^2}{mk}\left(3\left\{\frac{h}d\right\}^2-\left\{\frac{h}d\right\}\right) &\text{if}\quad 0\le \left\{\frac{h}d\right\}<\frac 12,\\[0.8 em]
	\frac{d^2}{mk}\left(3\left\{\frac{h}d\right\}^2-5\left\{\frac{h}d\right\}+2\right) &\text{if}\quad \frac 12\le \left\{\frac{h}d\right\}<1,
	\end{cases}\nonumber\\
	&\hspace{1 cm}=\begin{cases}
	\frac{d^2}{mk}\left(\frac{3j^2}{d^2}-\frac jd\right) &\text{if}\quad h\in d\mathbb{N}_0+j,\ 0\le j\le \left\lfloor\frac{d-1}{2}\right\rfloor,\\[0.5 em]
	\frac{d^2}{mk}\left(\frac{3j^2}{d^2}-\frac{5j}d+2\right) &\text{if}\quad h\in d\mathbb{N}_0+j,\ \left\lfloor\frac{d+1}{2}\right\rfloor\le j\le d-1.
	\end{cases}
	\end{align*}
	Since $m$ is odd and $d\mid m$, making the change of variables $j\mapsto d-j$ in the second case, we thus need to maximize 
	\begin{align}\label{ref1}
	\begin{cases}
	\frac{d^2}{mk}\left(\frac{3j^2}{d^2}-\frac jd\right) &\text{if}\quad h\in d\mathbb{N}_0+j,\ 0\le j\le \frac{d-1}{2},\\[0.5 em]
	\frac{d^2}{mk}\left(\frac{3j^2}{d^2}-\frac{j}d\right) &\text{if}\quad h\in d\mathbb{N}-j,\ 1\le j\le \frac{d-1}2.
	\end{cases}
	\end{align}
	Observe that for $d=3$ these expressions vanish. So for the remainder of the proof, we assume $d>3$. For $m$ and $d$ fixed the expressions in \eqref{ref1} are maximized for $j=\frac{d-1}{2}$. The corresponding contribution is $\frac{(d-1)(d-3)}{4mk}$. Moreover, one sees that this is maximal for $k$ minimal with $\gcd(m,k)=d$, which occurs for $k=d$.
	\rm
	The expression we need to maximize in this case is $\frac{(d-1)(d-3)}{4md}$.  As $d\mid m$, this is maximized for $d=m$. The corresponding values of $h$ are $\frac{m-1}{2}$ and $\frac{m+1}{2}$ which are clearly coprime to $k=m$.\qedhere 
\end{proof}

\begin{lemma}\label{lem2}
	For $m\in \mathbb{N}_{>3}$ odd, $\ell\in \mathbb{N}$, and $h\in \{\frac{m-1}{2},\frac{m+1}{2}\}$, we have, as $z\to 0$,
	\[
	F^{\ell}_m\left(e^{\frac{2\pi i}{m}\left(h+iz\right)}\right)\sim C^{\ell}_{m,h}e^{\left(\frac 14-\frac 1m+\frac{3}{4m^2}\right)\frac{\pi\ell}z}
	\]
	with 
	\[
	C_{m,h}:=\begin{cases}
	-ie^{\frac{5\pi i}{2m}-\frac{3\pi i}{m^2}}&\quad \text{if}\ h=\frac{m-1}{2},\\ 
	ie^{-\frac{5\pi i}{2m}+\frac{3\pi i}{m^2}}&\quad \text{if}\ h=\frac{m+1}{2}. 
	\end{cases}
	\]
\end{lemma}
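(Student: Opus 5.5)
The plan is to compute $G_m\left(\frac{h+iz}{m}\right)$ exactly to leading order, rather than only up to the $e^{O_{h,k}(1)}$ factor of \Cref{Fin1}, and then convert back to $F_m$. Since $\sim$ is multiplicative, it suffices to treat $\ell=1$ and raise to the $\ell$-th power at the end. Using the product expansion in \Cref{lem:thetaprops} (3) with $\tau\mapsto m\tau$, one checks that $G_m(\tau)=q^{\frac12}F_m(q)$, so
\[
F_m\left(e^{\frac{2\pi i}{m}(h+iz)}\right)=e^{-\frac{\pi i (h+iz)}{m}}\,G_m\left(\frac{h+iz}{m}\right),
\]
and the prefactor tends to $e^{-\frac{\pi i h}{m}}$ as $z\to0$.

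Next, $G_m\left(\frac{h+iz}{m}\right)=\frac{\vartheta\left(\frac{h+iz}{m};h+iz\right)}{\vartheta\left(\frac{2(h+iz)}{m};h+iz\right)}$. I remove the integer $h$ from the modular variable directly from the series definition. For $n\in\mathbb Z+\frac12$ we have $\frac{n^2}{2}\in \frac18+\mathbb Z$, hence $\vartheta(w;\tau+h)=e^{\frac{\pi i h}{4}}\vartheta(w;\tau)$. This common factor cancels in the quotient, which becomes
\[
\frac{\vartheta\left(\frac1m iz+\frac hm;iz\right)}{\vartheta\left(\frac2m iz+\frac{2h}m;iz\right)}.
\]
For the numerator, $b=\frac hm=\frac12\mp\frac1{2m}\in(0,1)$, so \Cref{newlem} (3) applies with $a=\frac1m$.

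For the denominator there are two cases.
\begin{itemize}
\item If $h=\frac{m-1}2$, then $\frac{2h}{m}=1-\frac1m\in(0,1)$, and \Cref{newlem} (3) applies directly.
\item If $h=\frac{m+1}2$, then $\frac{2h}m=1+\frac1m$. I first apply \Cref{lem:thetaprops} (2) with $\lambda=0$, $\mu=1$, which contributes a factor $-1$, and then use $b=\frac1m$.
\end{itemize}

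The exponential growth is then read off as follows. In the numerator, $b-b^2-\frac14=-\frac1{4m^2}$. In the denominator, with $b'\in\{1-\frac1m,\frac1m\}$, we get $b'-b'^2-\frac14=\frac1m-\frac1{m^2}-\frac14$. The difference is $\frac14-\frac1m+\frac3{4m^2}$, as claimed, and the $z^{-\frac12}$ factors cancel.

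The remaining and most error-prone step is the constant. $C_{m,h}$ should equal
\[
e^{-\frac{\pi i h}{m}}\cdot\frac{e^{-2\pi i\frac{h}{m^2}+\frac{\pi i}{m}}}{\epsilon\, e^{-2\pi i\frac{2}{m}b'+\frac{2\pi i}{m}}},
\]
where $\epsilon=1$ for $h=\frac{m-1}2$ and $\epsilon=-1$ for $h=\frac{m+1}2$. I will simplify this by substituting $h=\frac{m\mp1}{2}$ and the corresponding $b'$. For instance, $e^{-\frac{\pi i h}{m}}=e^{-\frac{\pi i}{2}}e^{\pm\frac{\pi i}{2m}}$, which is the source of the $\mp i$. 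The target is $-ie^{\frac{5\pi i}{2m}-\frac{3\pi i}{m^2}}$, respectively $ie^{-\frac{5\pi i}{2m}+\frac{3\pi i}{m^2}}$. I expect the only real obstacle to be tracking these roots of unity, together with the sign $-1$ from the shift $\mu=1$, correctly.
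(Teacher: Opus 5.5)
Your proposal is correct and is essentially the paper's argument: write $F_m$ at the point as $e^{-\frac{\pi i h}{m}}=-ie^{\pm\frac{\pi i}{2m}}$ times the theta quotient in the modular variable $iz$, then apply \Cref{newlem} (3). Your unfinished simplification does give $-ie^{\frac{5\pi i}{2m}-\frac{3\pi i}{m^2}}$ for $h=\frac{m-1}{2}$ and $ie^{-\frac{5\pi i}{2m}+\frac{3\pi i}{m^2}}$ for $h=\frac{m+1}{2}$; the prefactor contributes $-i$ in both cases, and the flip to $+i$ comes from $\epsilon=-1$.

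The only difference is the second cusp: the paper avoids your case $h=\frac{m+1}{2}$ (and the shift by $\mu=1$) because the coefficients are real. Hence $F_m(\overline{q})=\overline{F_m(q)}$, and $C_{m,\frac{m+1}{2}}=\overline{C_{m,\frac{m-1}{2}}}$.
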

\begin{proof}
	Note that it is enough to prove the case $\ell=1$. By \Cref{lem:thetaprops} (3), for $m\in \mathbb{N}_{>3}$ odd, we have, as $z\to 0$,
	\begin{align*}
	F_m\left(e^{\frac{2\pi i}{m}\left(\frac{m-1}2+iz\right)}\right)&\sim -i e^{\frac{\pi i}{2m}}\frac{\vartheta\left(\frac 12-\frac{1}{2m}+\frac{iz}m;iz\right)}{\vartheta\left(1-\frac{1}{m}+\frac{2iz}m;iz\right)}.	\end{align*}
	Moreover, by definition, $c^{[1]}_{m}(n)\in \mathbb{R}$. So, $F_m(\overline{q})=\overline{F_m(q)}$, and thus we only need to prove the claim for $\frac{m-1}{2}$. \Cref{newlem} (3) finishes the proof.\qedhere
\end{proof}

\begin{proof}[Proof of \Cref{A}]
	By \Cref{lem:thetaprops} (5), $q^{-\frac{3\ell}{2m}+\frac{\ell}2}F^{\ell}_m(q)=q^{-\frac{3 \ell}{2m}}G_m^\ell(\tau)$ is a modular function. By \Cref{lem1} its maximal growth occurs at the cusps $\frac{m-1}{2m}$ and $\frac{m+1}{2m}$. Using \Cref{asymp}, \Cref{lem2}, and \eqref{phase}, \Cref{Thm:Zuckerman} now gives, as $n\to \infty$, 
	\[
	c^{[\ell]}_m(n)\sim\left(\frac{\left(1-\frac 1m\right)\left(1-\frac 3m\right)\ell}{2m}\right)^{\frac 14} \frac{(-1)^n\cos\left(\pi\left(\phi_m\ell -\frac{n}{m}\right)\right)e^{\pi\sqrt{\frac{2\left(1-\frac 1m\right)\left(1-\frac 3m\right)\ell }{m}n }}}{n^{\frac 34}}.
	\]
	Here we require that the right-hand side does not vanish, which we now prove. Precisely, we need to show that, for $n\in \mathbb{N}$ and $\ell\in \mathbb{N}$,
	\[
	\cos\left(\pi\left(\phi_m\ell -\frac{n}{m}\right)\right)\neq 0.
	\]
	For this, it suffices to prove that $\phi_m\ell -\frac{n}{m} \notin \mathbb{Z}+\frac{1}{2}$. Suppose for contradiction that, for some $j\in \mathbb{Z}$, 
	\begin{align*}
	\phi_m\ell -\frac{n}{m}=j+\frac 12\Rightarrow (m-2)(m-3)\ell -2mn=2m^2j+m^2,
	\end{align*}
	which cannot hold because the left-hand side is even whereas the right-hand side is odd, as $m$ is odd. This concludes the proof.
\end{proof}

\begin{proof}[Proof of Corollary~\ref{BenCor}]
	By \Cref{A}, for $n\gg 1$ with $n\equiv r\pmod{2m}$,
	\begin{align}\label{eqn1}
		\textnormal{sgn}\left(c^{[\ell]}_{m}(n)\right)
	&=
		(-1)^r\textnormal{sgn}\left(
		\cos\left(\pi\left(\phi_m\ell-\frac{r}{m}\right)\right)
		\right).
	\end{align}

	Thus, it suffices to determine $\textnormal{sgn}(\cos(\pi( \phi_m\ell-\frac{ r}{m})))$.
	We only consider the case $\ell$ even, the case $\ell$ odd is shown analogously. Using \eqref{phase}, we write
	\begin{align}\label{eqn2}
	\pi\left(\phi_m\ell-\frac{r}{m}\right)
	&=\pi\left(\frac{\ell}{2}+\frac{3\ell}{m^2} -\frac{ R}{m}\right),
	\end{align}
	where 
	\begin{equation}\label{eqn3}
	R:=R_{r,\ell}:=r+\frac{5\ell}{2}.
	\end{equation}
	Using \eqref{eqn2}, we obtain 
	\begin{align}\label{eqn4}
	\cos\left(\pi\left(\phi_m\ell-\frac{r}{m}\right)\right)
	=(-1)^{\frac{\ell}{2}}\cos\left(\pi\left( \frac{ R}{m}-\frac{3\ell}{m^2}\right)\right).
	\end{align}
	Now note that by \eqref{eqn3}, we have 
	\[5\le R<\frac{5m}{2}.\]
	
	Next, we split the range of $R$. For
	$5\le R\le \frac{m-1}{2}$, we have  
	\[
	0<\pi\left(\frac{ R}{m}-\frac{3\ell}{m^2}\right)<\frac{\pi}{2}.
	\]
	Thus
	\begin{equation}\label{eqn5}
	\textnormal{sgn}\left(\cos\left(\pi\left( \frac{ R}{m}-\frac{3\ell}{m^2}\right)\right)\right)=1\ \ \text{for}\ \ 5\le R\leq \frac{m-1}{2}.
	\end{equation}

	Next, for $\frac{m+1}{2}\le R\le \frac{3m-1}{2}$, we have 
	\[
	\frac{\pi}{2}< \pi\left(\frac{ R}{m}-\frac{3\ell}{m^2}\right)<\frac{3\pi}{2}.
	\]
	Thus, we obtain 
	\begin{equation}\label{eqn6}
	\textnormal{sgn}\left(\cos\left(\pi\left( \frac{ R}{m}-\frac{3\ell}{m^2}\right)\right)\right)=-1\ \ \text{for}\ \ \frac{m+1}{2}\le R\le \frac{3m-1}{2}.
	\end{equation}
	
	Finally, for the remaining range of $R$, i.e., $\frac{3m+1}{2}\le R< \frac{5m}{2}$, we have 
	\[
	\frac{3\pi}{2}< \pi\left(\frac{ R}{m}-\frac{3\ell}{m^2}\right)<\frac{5\pi}{2}.
	\]
	Thus, we obtain
	\begin{equation*}
	\textnormal{sgn}\left(\cos\left(\pi\left( \frac{ R}{m}-\frac{3\ell}{m^2}\right)\right)\right)=1\ \ \text{for}\ \ \frac{3m+1}{2}\le R<\frac{5m}{2}.
	\end{equation*}
	Combining this, \eqref{eqn5}, \eqref{eqn6}, \eqref{eqn4}, \eqref{eqn3}, and \eqref{eqn1}, we obtain the claim.\qedhere 
\end{proof}

\section{Proof of \Cref{mainthmeven} and \Cref{mainthmevencor}}\label{sec3}

In this section, we consider the case that $m$ is even. We first determine the asymptotics of $F^{\ell}_m(-e^{-z})$ as $z\to 0$. Note that, by the fact $m$ even and \eqref{Eqn0}, we have
\begin{equation}\label{Eqn2}
F^{\ell}_m(-q)=\frac{\left(-q,-q^{m-1};q^m\right)^{\ell}_{\infty}}{\left(q^2,q^{m-2};q^m\right)^{\ell}_{\infty}}=\sum_{n\ge 0}(-1)^n c^{[\ell]}_m(n)q^n.
\end{equation}
We set
\begin{equation}\label{def}
d^{[\ell]}_m(n):=(-1)^n c^{[\ell]}_m(n).
\end{equation}

\begin{lemma}\label{Lem1}
	For $n\in \mathbb{N}_0$, $d^{[\ell]}_m(n)\ge 0$ and $\{d^{[\ell]}_m(n)\}_{n\ge 0}$ is weakly increasing.
	
\end{lemma}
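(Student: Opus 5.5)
The plan is to read off both properties directly from the product representation \eqref{Eqn2}, without any analytic input. For nonnegativity, write
\[
\sum_{n\ge0}d^{[\ell]}_m(n)q^n=F^{\ell}_m(-q)=\left(-q,-q^{m-1};q^m\right)^{\ell}_{\infty}\cdot\frac{1}{\left(q^2,q^{m-2};q^m\right)^{\ell}_{\infty}},
\]
using \eqref{def}. The first factor is a product of terms $1+q^{a}$ with $a>0$, so it has nonnegative coefficients. The second factor is a product of geometric series $\frac{1}{1-q^{a}}=\sum_{j\ge0}q^{aj}$, so it also has nonnegative coefficients; it generates $\ell$-coloured partitions into parts $\equiv\pm2\pmod m$. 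A product of power series with nonnegative coefficients has nonnegative coefficients, which gives $d^{[\ell]}_m(n)\ge0$.

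For monotonicity, I will show that $(1-q)F^{\ell}_m(-q)=\sum_{n\ge0}(d^{[\ell]}_m(n)-d^{[\ell]}_m(n-1))q^n$ (with $d^{[\ell]}_m(-1):=0$) has nonnegative coefficients. The key point is that the numerator contains the factor $(1+q)^{\ell}$, coming from the $j=0$ term of $(-q;q^m)_\infty$. The denominator contains $(1-q^2)^{\ell}$, coming from the $j=0$ term of $(q^2;q^m)_\infty$. Here we use $m\ge4$, so that $2<m$ and this term is a genuine factor; when $m=4$ we have $m-2=2$ and the factor even occurs twice, which is harmless.

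Hence
\[
(1-q)F^{\ell}_m(-q)=\frac{(1+q)^{\ell-1}}{(1-q^2)^{\ell-1}}\cdot\frac{(-q;q^m)^{\ell}_{\infty}}{(1+q)^{\ell}}\cdot(1+q)(1-q)\cdot\frac{(1-q^2)^{\ell}\left(-q^{m-1};q^m\right)^{\ell}_{\infty}}{\left(q^2,q^{m-2};q^m\right)^{\ell}_{\infty}}\cdot\frac{1}{(1-q^2)}.
\]
Since $(1+q)(1-q)=1-q^2$, this collapses to
\[
(1-q)F^{\ell}_m(-q)=(1+q)^{\ell-1}\,\frac{\left(-q^{m+1};q^m\right)^{\ell}_{\infty}\left(-q^{m-1};q^m\right)^{\ell}_{\infty}}{(1-q^2)^{\ell-1}\left(q^{m+2};q^m\right)^{\ell}_{\infty}\left(q^{m-2};q^m\right)^{\ell}_{\infty}}.
\]
Every factor on the right is again either a product of terms $1+q^a$ or the reciprocal of a product of terms $1-q^a$. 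So, by the same argument as for nonnegativity, all coefficients are nonnegative. This gives $d^{[\ell]}_m(n)\ge d^{[\ell]}_m(n-1)$ for all $n\ge1$.

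There is no real obstacle. The only point needing care is the bookkeeping showing that the factor $1-q$ is absorbed exactly: one copy of $1+q$ from the numerator is used to turn $1-q$ into $1-q^2$, and that $1-q^2$ cancels one denominator factor, which exists because $m\ge4$. Note also that for $\ell=1$ the prefactor $\frac{(1+q)^{\ell-1}}{(1-q^2)^{\ell-1}}$ is simply $1$.
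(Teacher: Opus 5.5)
Your proof is correct and is the same argument as the paper's: nonnegativity is read off from the product in \eqref{Eqn2}, and monotonicity follows by writing $(1-q)F^{\ell}_m(-q)$ as a product with manifestly nonnegative coefficients. Your final expression agrees with the paper's $\frac{(-q^{m+1},-q^{m-1};q^m)^{\ell}_{\infty}}{(1-q)^{\ell-1}(q^{m+2},q^{m-2};q^m)^{\ell}_{\infty}}$, since $\frac{(1+q)^{\ell-1}}{(1-q^2)^{\ell-1}}=\frac{1}{(1-q)^{\ell-1}}$.
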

\begin{proof}
	From \eqref{Eqn2}, it directly follows that $d^{[\ell]}_m(n)\ge 0$ for $n\in \mathbb{N}_0$. 
	
	To prove that $\{d^{[\ell]}_m(n)\}_{n\ge 0}$ is weakly increasing, it suffices to show that $(1-q)F^{\ell}_m(-q)$ has non-negative coefficients. By \eqref{Eqn2}, we have 
	\begin{align*}
	\left(1-q\right)F^{\ell}_m(-q)
	&=\frac{\left(-q^{m+1},-q^{m-1};q^m\right)^{\ell}_{\infty}}{\left(1-q\right)^{\ell-1}\left(q^{m+2},q^{m-2};q^m\right)^{\ell}_{\infty}}
	\end{align*}
	which has non-negative coefficients. This concludes the proof.\qedhere     
\end{proof}

Next, we determine the asymptotics of $F^{\ell}_m(-e^{-z})$, as $z\to 0$.

\begin{lemma}\label{Lem2}
	We have, as $z\to 0$,
	\[
	F^{\ell}_m\left(-e^{-z}\right)\sim \frac{e^{\frac{\pi^2\ell}{2mz}}}{2^{\ell}\sin^{\ell}\left(\frac{2\pi}{m}\right)}.
	\]
\end{lemma}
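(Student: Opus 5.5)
The plan is to rewrite $F_m(-q)$ as a quotient of two Jacobi theta functions in the modular variable $m\tau$, and then read off the asymptotics from \Cref{newlem}~(3) and (4). As in the earlier lemmas, it suffices to treat $\ell=1$; the general case follows by taking the $\ell$-th power.

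\textbf{Theta representation.} Put $q=e^{-z}=e^{2\pi i\tau}$ with $\tau=\frac{iz}{2\pi}$. By \Cref{lem:thetaprops}~(3), for $x=e^{2\pi i w}$ we have
\[
\vartheta(w;m\tau)=-iq^{\frac m8}x^{-\frac12}\left(q^m;q^m\right)_\infty\left(x,\tfrac{q^m}{x};q^m\right)_\infty .
\]
Since $m$ is even, \eqref{Eqn2} shows that the numerator of $F_m(-q)$ is $(x,q^m/x;q^m)_\infty$ with $x=-q$, that is $w=\tau+\frac12$. The denominator is the same product with $x=q^2$, that is $w=2\tau$. The factors $-iq^{\frac m8}(q^m;q^m)_\infty$ cancel in the quotient, so
\[
F_m(-q)=\frac{(-q)^{\frac12}}{(q^2)^{\frac12}}\,\frac{\vartheta\left(\tau+\frac12;m\tau\right)}{\vartheta\left(2\tau;m\tau\right)} = \varepsilon\, q^{-\frac12}\frac{\vartheta\left(\tau+\frac12;m\tau\right)}{\vartheta\left(2\tau;m\tau\right)}.
\]
Here $\varepsilon$ is an explicit fourth root of unity coming from the branch of $x^{-1/2}$. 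It will be pinned down at the end.

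\textbf{Asymptotics.} Set $z':=\frac{mz}{2\pi}$, so that $m\tau=iz'$ and $\tau=\frac1m iz'$; note $z'\to0$ as $z\to 0$. Apply \Cref{newlem}~(3) with $a=\frac1m$ and $b=\frac12$. The exponent is $(-b^2-\frac14+b)\frac{\pi}{z'}=0$, and the phase is $e^{-2\pi i ab+\pi i a}=1$. Hence
\[
\vartheta\left(\tfrac1m iz'+\tfrac12;iz'\right)\sim-\frac{1}{\sqrt{z'}}.
\]
Apply \Cref{newlem}~(4) with $a=\frac2m\notin\mathbb{Z}$, which holds because $m\ge4$. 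This gives
\[
\vartheta\left(\tfrac2m iz';iz'\right)\sim-\frac{2i\sin\left(\frac{2\pi}m\right)}{\sqrt{z'}}e^{-\frac{\pi}{4z'}}.
\]
The factors $\sqrt{z'}$ cancel in the quotient, and $q^{-\frac12}\to1$. Since $\frac{\pi}{4z'}=\frac{\pi^2}{2mz}$, we get
\[
F_m\left(-e^{-z}\right)\sim \frac{\varepsilon'}{2\sin\left(\frac{2\pi}{m}\right)}e^{\frac{\pi^2}{2mz}}
\]
for a unimodular constant $\varepsilon'$.

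\textbf{Fixing the constant.} This is the step I expect to be the main obstacle, since it requires careful branch bookkeeping. The quickest route avoids that bookkeeping. For real $z=t>0$, the left-hand side is positive by \Cref{Lem1}, since its coefficients $d_m^{[1]}(n)$ are non-negative and not all zero. The quantity $\sin(\frac{2\pi}m)$ is also positive. A constant $\varepsilon'$ independent of $z$ must therefore equal $1$. I would still verify this directly, using $(-q)^{1/2}=iq^{1/2}$ together with the factor $\frac1i$ from the denominator, as a consistency check.

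Finally, the asymptotics from \Cref{newlem} hold as $z\to0$ in each region $R_\Delta$, so the statement holds there as well. This is exactly the form needed to feed into \Cref{Ingham}.
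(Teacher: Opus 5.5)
Your proof is correct, and the constant does come out as $1$. Directly, $F_m(-q)=iq^{-\frac12}\vartheta(\tau+\frac12;m\tau)/\vartheta(2\tau;m\tau)$, so $\varepsilon=i$. The quotient of your two asymptotics is $\frac{1}{2i\sin(\frac{2\pi}{m})}e^{\frac{\pi^2}{2mz}}$, and multiplying by $\varepsilon=i$ gives exactly the claim. Your positivity argument via \Cref{Lem1} is also valid, though not needed.

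Your route differs from the paper's. The paper avoids the half-period shift. It uses $1+x=\frac{1-x^2}{1-x}$ to write $(-q,-q^{m-1};q^m)_\infty=\frac{(q^2,q^{2m-2};q^{2m})_\infty}{(q,q^{m-1};q^m)_\infty}$, hence
\[
F_m(-q)=-\frac{ie^{-\pi i\tau}\left(q^m;q^m\right)_\infty^2\vartheta\left(2\tau;2m\tau\right)}{\left(q^{2m};q^{2m}\right)_\infty\vartheta\left(\tau;m\tau\right)\vartheta\left(2\tau;m\tau\right)}.
\]
It then applies \Cref{newlem}~(4) three times, together with the asymptotics of $(q)_\infty$ for the two Pochhammer factors.

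Your version needs only two theta functions. It uses \Cref{newlem}~(3) once, with $b=\frac12$, where the exponential factor and the phase both become trivial, and \Cref{newlem}~(4) once. No eta-type factors appear, so you avoid the cancellation of several competing exponentials. In the paper these are $-\frac{\pi^2}{3mz}-\frac{\pi^2}{4mz}$ against $-\frac{\pi^2}{12mz}-\frac{\pi^2}{mz}$.

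The paper's form has one advantage. It expresses everything through theta functions at torsion points $a\tau$ and eta-products, which are the objects of \Cref{lem:thetaprops}~(5) used elsewhere in the paper. For this lemma alone, that uniformity buys nothing essential, and your computation is the more economical one.
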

\begin{proof}
	Note that it is enough to prove the case $\ell=1$. By \eqref{Eqn2} and \Cref{lem:thetaprops} (3), we have  
	\begin{align*}
	F_m(-q)=-\frac{ie^{-\pi i\t}\left(q^m;q^m\right)^2_{\infty}\vartheta\left(2\t;2m\t\right)}{\left(q^{2m};q^{2m}\right)_{\infty}\vartheta\left(\t;m\t\right)\vartheta\left(2\t;m\t\right)}.
	\end{align*}
	Using this with $\t=\frac{iz}{2\pi}$, \Cref{L:q-Pochhammer asymptotics} and \Cref{newlem} (4) imply the claim.\qedhere  
\end{proof}

\begin{proof}[Proof of \Cref{mainthmeven}]
	By \Cref{Lem1}, $d^{[\ell]}_m(n)$ satisfies the monotonicity and non-negativity required to apply \Cref{Ingham}. Moreover, \Cref{Lem2} shows that $F^{\ell}_m(-e^{-z})$ satisfies \eqref{Eqn1} with $\nu=\frac{1}{2^{\ell}\sin^{\ell}(\frac{2\pi}{m})}, \b=0,$ and $\g=\frac{\pi^2\ell}{2m}$. By \Cref{Ingham}, we have, as $n\to \infty$, 
	\[
	d^{[\ell]}_m(n)\sim \frac{\ell^{\frac 14}}{2^{\ell+\frac 54}m^{\frac 14}\sin^{\ell}\left(\frac{2\pi}{m}\right)}\frac{e^{\pi\sqrt{\frac{2\ell }{m}n}}}{n^{\frac 34}}.
	\]
	By \eqref{def}, we conclude the proof.\qedhere 
\end{proof}

\begin{proof}[Proof of \Cref{mainthmevencor}]
	The claim follows from \Cref{Lem1} and \eqref{def}.\qedhere 
\end{proof}

\section{Proof of \Cref{A1} and \Cref{FlipCor}}\label{sec4}
 For $m\in \mathbb{N}_{> 3}$ and $\l \in \mathbb{N}$, we study the main growth of $\mathcal{G}^{\l}_m(\t)$, where
\[
\mathcal{G}_m(\t):=\frac{\vartheta\left(2\t;m\t\right)}{\vartheta\left(\t;m\t\right)}=q^{-\frac 12}\mathcal{F}_m(q),
\]
as $\tau\to \frac hk\in \mathbb{Q}$. Noting that $\mathcal{G}_m=G_m^{-1}$, \Cref{Fin1} implies 
\rm
the following: 
\begin{lemma}\label{Fin1a}
	Let $m\in \mathbb{N}_{>3}$ and $\l, k\in \mathbb{N}$. For $0\le h<k$ with $\gcd(h,k)=1$, $d:=\gcd(m,k)$ with $\mu:=\frac md$ and $\kappa:=\frac kd$, we have,
	\begin{align*}
\mathcal{G}^{\l}_m\left(\frac{h}{k}+\frac{iz}{k}\right)&=e^{-\frac{\mathcal{A}_{m,\l}(h,k)}{z}+O_{h,k}(1)}\ \ (\text{as}\ z\to 0).
	\end{align*}
\end{lemma}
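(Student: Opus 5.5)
The statement to prove is Lemma~\ref{Fin1a}. The plan is to deduce it from \Cref{Fin1} by inversion, since $\mathcal{G}_m=G_m^{-1}$.

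First I would record the identity itself. By definition,
\[
\mathcal{G}_m(\t)=\frac{\vartheta(2\t;m\t)}{\vartheta(\t;m\t)}=\left(\frac{\vartheta(\t;m\t)}{\vartheta(2\t;m\t)}\right)^{-1}=G_m(\t)^{-1}.
\]
This is valid wherever both theta values are nonzero. By the product expansion in \Cref{lem:thetaprops} (3), $\vartheta(a\t;m\t)$ vanishes on $\mathbb{H}$ only if $q^{a}=q^{mj}$ for some $j\in\mathbb{Z}$. For $a\in\{1,2\}$ and $m>3$ this is impossible, so both quotients are nonvanishing holomorphic functions on $\mathbb{H}$. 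Consequently $\mathcal{G}^{\l}_m=G_m^{-\l}$ for every $\l\in\mathbb{N}$.

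Next, fix $h,k$ as in the statement and apply \Cref{Fin1} with $\ell=\l$. As $z\to0$ this gives
\[
G^{\l}_m\left(\frac hk+\frac{iz}k\right)=e^{\frac{\mathcal{A}_{m,\l}(h,k)}{z}+O_{h,k}(1)}.
\]
Taking reciprocals negates the exponent. The $O_{h,k}(1)$ term becomes $-O_{h,k}(1)$, which is again $O_{h,k}(1)$. This yields exactly the claimed formula with $-\mathcal{A}_{m,\l}(h,k)/z$.

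The only point needing care is the meaning of the $O_{h,k}(1)$ term in \Cref{Fin1}. Its proof works with $\operatorname{Log}$ modulo $2\pi i\mathbb{Z}$, so the error term is determined only up to $2\pi i\mathbb{Z}$. After exponentiating, this ambiguity disappears, and the statement is that the quotient $G^{\l}_m e^{-\mathcal{A}_{m,\l}(h,k)/z}$ has bounded absolute value and bounded argument modulo $2\pi$. Negation preserves this, so the statement for $\mathcal{G}^{\l}_m$ follows.

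No step here is a genuine obstacle: the argument is the one-line inversion, plus the nonvanishing check that justifies taking reciprocals.
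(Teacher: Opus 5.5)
Your proposal is correct and matches the paper, which obtains this lemma in one line from \Cref{Fin1} via $\mathcal{G}_m=G_m^{-1}$; your check that neither theta factor vanishes on $\mathbb{H}$ is a harmless extra justification. One small wording fix: $e^{O_{h,k}(1)}$ means the modulus is bounded above \emph{and} away from zero (a bounded-argument condition modulo $2\pi$ adds nothing), and it is this two-sided bound that survives taking reciprocals.
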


Next, we maximize $|\mathcal{G}^{\l}_m(\frac 1k(h+iz))|$.

\begin{lemma}\label{lem1a}
	For $m\in \mathbb{N}_{>3}$ with $m\equiv \pm 1 \pmod{6}$ and $\l\in \mathbb{N}$, $|\mathcal{G}^{\l}_m(\frac 1k(h+iz))|$ is maximal, as $z\to 0$, for $\frac hk\in \{\frac{m\mp 1}{6m}, \frac{5m\pm 1}{6m}\}$.
\end{lemma}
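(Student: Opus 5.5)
The plan is to run the argument of \Cref{lem1} with the sign of the exponent reversed. By \Cref{Fin1a}, $\mathcal{G}^{\l}_m(\frac1k(h+iz))=e^{-\mathcal{A}_{m,\l}(h,k)/z+O_{h,k}(1)}$. Since $\ell=\l>0$ only scales the exponent, we may take $\l=1$, and we must maximize $-\mathcal{A}_{m,1}(h,k)$ over $0\le h<k$ with $\gcd(h,k)=1$. Write $d=\gcd(m,k)$, $\mu=\frac md$, $\kappa=\frac kd$. Then $\frac{\mu h}{m}=\frac hd$, so with $x:=\{\frac hd\}$ the quantity to maximize is $\frac{\pi d^2}{mk}\psi(x)$, where
\[
\psi(x):=-\left(x-x^2-\{2x\}+\{2x\}^2\right)=\begin{cases} x-3x^2 & \text{if } 0\le x<\frac12,\\ -3x^2+5x-2 & \text{if } \frac12\le x<1.\end{cases}
\]
Substituting $x\mapsto 1-x$ maps the second branch onto the first. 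So, as in the proof of \Cref{lem1}, it suffices to treat $h\equiv \pm j\pmod d$ with $0\le j\le \frac{d-1}{2}$ (recall $d$ is odd). There the expression becomes $\frac{1}{mk}(jd-3j^2)$. If $d=1$, then $x=0$ and we only get $O_{h,k}(1)$, so we may assume $d>1$.

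Next we fix $d$ and optimize over $j$. Since $\gcd(m,6)=1$ and $d\mid m$, we have $d\equiv \varepsilon\pmod 6$ with $\varepsilon\in\{\pm1\}$. The map $j\mapsto jd-3j^2$ is a concave parabola with vertex at $j=\frac d6$. The integers nearest the vertex are $\frac{d-\varepsilon}{6}$, at distance $\frac16$, and the next one on the other side is at distance $\frac56$. Hence the maximum over integers $j$ is attained at $j=\frac{d-\varepsilon}{6}$, with value
\[
\frac{d-\varepsilon}{6}\cdot\frac{d+\varepsilon}{2}=\frac{d^2-1}{12}>0.
\]
So the best contribution for given $d,k$ is $\frac{d^2-1}{12mk}$.

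Then we optimize over $k$ and $d$. Since $d\mid k$, the value decreases in $k$, so we take $k=d$; this is allowed because $h=j$ or $h=d-j$ is coprime to $d$ (indeed $6j=d-\varepsilon$ forces $\gcd(j,d)\mid \varepsilon$). This leaves $\frac{d^2-1}{12md}=\frac{1}{12m}(d-\frac1d)$, which is strictly increasing in $d$, so it is maximal at $d=m$. With $m\equiv\pm1\pmod 6$, the maximizing residues are $h=\frac{m\mp1}{6}$ and $h=m-\frac{m\mp1}{6}=\frac{5m\pm1}{6}$, with $k=m$. This gives $\frac hk\in\{\frac{m\mp1}{6m},\frac{5m\pm1}{6m}\}$, and the maximal exponent is $\frac{\pi\l(m^2-1)}{12m^2 z}$.

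The only delicate step is the integer optimization of $jd-3j^2$. One must check that the nearest integer to $\frac d6$ is the one on the $\varepsilon$-side, and that it lies in the admissible range $0\le j\le\frac{d-1}{2}$, which holds for $d\ge5$. One must also check that ties cannot occur across different $d$, which follows from the strict monotonicity of $d-\frac1d$. The remaining steps are bookkeeping identical to \Cref{lem1}.
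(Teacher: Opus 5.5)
Your proposal is correct and follows the paper's proof essentially step by step. Like the paper, you fold the second branch via $j\mapsto d-j$, maximize $jd-3j^2$ at the integer nearest $\frac d6$, take $k=d$, then $d=m$ by monotonicity of $d-\frac1d$, and check $\gcd(j,m)=1$. Two details of your write-up are slightly more careful than the paper's: you let the sign $\varepsilon$ be determined by $d$ modulo $6$ (the paper ties it to $m$), and you check that $j$ lies in the admissible range for $d\ge 5$.
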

\begin{proof}
	By taking the negative of \eqref{Max} we see that, as $z\to 0$, $|\mathcal{G}^{\l}_m(\frac 1k(h+iz))|$ is maximal for the maximal value of
	\begin{equation}\label{Maxa}
	\begin{cases}
	\frac{1}{\mu\kappa}\left(-3\left\{\frac{\mu h}m\right\}^2+\left\{\frac{\mu h}m\right\}\right) &\text{if}\quad 0\le \left\{\frac{\mu h}m\right\}<\frac 12,\\[0.8 em]
	\frac{1}{\mu\kappa}\left(-3\left\{\frac{\mu h}m\right\}^2+5\left\{\frac{\mu h}m\right\}-2\right) &\text{if}\quad \frac 12\le \left\{\frac{\mu h}m\right\}<1.
	\end{cases}
	\end{equation}
	Let $d\mid m$. If $d=1$, then $\mu=m$, $\kappa=k$, and $\{\frac{\mu h}{m}\}=\{h\}=0$. So, by \Cref{Fin1a}, $\mathcal{G}^{\l}_m(\frac 1k(h+iz))=O_{h,k}(1)$ as $z\to 0$. Next, we assume $d>1$.
	Plugging in $\mu=\frac md$ and $\kappa=\frac kd$, by \eqref{Maxa}, making the change of variables $j\mapsto d-j$ in the second case, thus we need to maximize 
	\begin{align}\label{new1}
	\begin{cases}
	\frac{d^2}{mk}\left(-\frac{3j^2}{d^2}+\frac jd\right) &\text{if}\quad h\in d\mathbb{N}_0+j,\ 0\le j\le \frac{d-1}{2},\\[0.5 em]
	\frac{d^2}{mk}\left(-\frac{3j^2}{d^2}+\frac{j}d\right) &\text{if}\quad h\in d\mathbb{N}-j,\ 1\le j\le \frac{d-1}2.
	\end{cases}
	\end{align}
	We note that for $m$ and $d$ fixed these expressions are maximized if $|6j-d|$ is minimal. Now $|6j-d|$ is minimal for $j=\frac{d\mp 1}{6}$. In this case, \eqref{new1} is maximal for $k=d$. The expression we need to maximize in this case is $\frac{d-\frac 1d}{12m}$. This is maximized for $d=m$. The maximum value is $\frac{1}{12}(1-\frac{1}{m^2})$, which is positive for $m\in \mathbb{N}_{>3}$. Since $j=\frac{m\mp 1}{6}$, we have $m=6j\pm 1$ and hence \[\gcd\left(m, j\right)=\gcd\left(6j\pm 1, j\right)=1.\]
	Thus, for $h\in m\mathbb{N}_0+j$ (for the plus sign) and $k=m$ (resp. $h\in m\mathbb{N}-j$ and $k=m$ for the minus sign), we have $\gcd(h,k)=1$. Taking the representative $0\le h<m$ gives $h=j=\frac{m\mp 1}{6}$ and $h=m-j=\frac{5m\pm 1}{6}$.
\end{proof}

Next, we determine the growth of $\mathcal{F}^{\l}_m(e^{\frac{2\pi i}{m}(h+iz)})$ as $z\to 0$.

\begin{lemma}\label{lem2a}
	For $m\in \mathbb{N}_{>3}$ with $m\equiv \pm 1 \pmod{6}$, $\l\in \mathbb{N}$, and $h\in \{\frac{m\mp 1}{6}, \frac{5m\pm 1}{6}\}$, we have,
	\[
	\mathcal{F}^{\l}_m\left(e^{\frac{2\pi i}{m}\left(h+iz\right)}\right)\sim \mathcal{C}^{\l}_{m,h}e^{ \left(\frac{1}{12}-\frac{1}{12m^2}\right)\frac{\pi\l}z}\quad \left(\text{as}\ z\to 0\right),
	\]
	where 
	\[
	\mathcal{C}_{m,h}:=\begin{cases}
	e^{\frac{\pi i}{6}\mp \frac{\pi i}{6m}\pm \frac{\pi i}{m^2}} &\quad \text{if}\ h=\frac{m\mp 1}{6},\\
	e^{-\frac{\pi i}{6}\pm \frac{\pi i}{6m}\mp \frac{\pi i}{m^2}}&\quad \text{if}\ h=\frac{5m\pm 1}{6}.	 
	\end{cases}
	\]
\end{lemma}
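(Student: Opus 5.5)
We will mirror the proof of \Cref{lem2}. Since $\mathcal{F}^{\l}_m=(\mathcal{F}_m)^{\l}$, both sides of the claim are $\l$-th powers of the case $\l=1$, so it suffices to treat $\l=1$. Moreover $\g^{[1]}_m(n)\in\mathbb{R}$, so $\mathcal{F}_m(\overline{q})=\overline{\mathcal{F}_m(q)}$. Replacing $h$ by $m-h$ sends $e^{\frac{2\pi i}{m}(h+iz)}$ to its complex conjugate (for real $z$; then extend to the relevant region by the same identity). Also $m-\frac{m\mp1}{6}=\frac{5m\pm1}{6}$, and $\mathcal{C}_{m,\frac{5m\pm1}{6}}=\overline{\mathcal{C}_{m,\frac{m\mp1}{6}}}$. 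Hence we only need to treat $h=\frac{m\mp1}{6}$.

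\emph{Step 1: write $\mathcal{F}_m$ as a theta quotient.} By \Cref{lem:thetaprops} (3) with $w=\t$ (respectively $w=2\t$) and $\t\mapsto m\t$, we get
\[
\vartheta(\t;m\t)=-iq^{\frac m8-\frac12}(q^m;q^m)_\infty(q,q^{m-1};q^m)_\infty,\qquad \vartheta(2\t;m\t)=-iq^{\frac m8-1}(q^m;q^m)_\infty(q^2,q^{m-2};q^m)_\infty.
\]
Taking the quotient gives $\mathcal{F}_m(q)=q^{\frac12}\frac{\vartheta(2\t;m\t)}{\vartheta(\t;m\t)}$. Now substitute $\t=\frac{h+iz}{m}$, so that $m\t=h+iz$. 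The transformation $\vartheta(w;\t+1)=e^{\frac{\pi i}{4}}\vartheta(w;\t)$, which is visible from the series definition, contributes the same factor to numerator and denominator, so it cancels. This yields
\[
\mathcal{F}_m\left(e^{\frac{2\pi i}{m}(h+iz)}\right)=e^{\frac{\pi i h}{m}}e^{-\frac{\pi z}{m}}\frac{\vartheta\left(\frac2m iz+\frac{2h}{m};iz\right)}{\vartheta\left(\frac1m iz+\frac hm;iz\right)}.
\]

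\emph{Step 2: apply \Cref{newlem} (3).} For $h=\frac{m\mp1}{6}$ we have $b_1:=\frac hm=\frac16\mp\frac1{6m}\in(0,1)$ and $b_2:=\frac{2h}{m}=2b_1\in(0,1)$. So \Cref{newlem} (3) applies directly with $(a,b)=(\frac1m,b_1)$ and with $(a,b)=(\frac2m,b_2)$; no integer shift via \Cref{lem:thetaprops} (2) is needed. The $z^{-\frac12}$ factors and the minus signs cancel in the quotient, and $e^{-\frac{\pi z}{m}}\to1$.

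The exponential growth is $\frac{\pi}{z}$ times
\[
b_2(1-b_2)-b_1(1-b_1)=b_1-3b_1^2=\frac1{12}-3\left(\frac{1}{6m}\right)^2=\frac1{12}-\frac1{12m^2}.
\]
Here we wrote $b_1=\frac16-\varepsilon$ with $\varepsilon=\pm\frac{1}{6m}$, and the linear terms in $\varepsilon$ cancel. This matches the claimed exponent.

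The constant is
\[
e^{\frac{\pi ih}{m}}\exp\left(-2\pi i\left(\frac{2}{m}\cdot\frac{2h}{m}-\frac1m\cdot\frac hm\right)+\pi i\left(\frac2m-\frac1m\right)\right)=e^{\frac{\pi ih}{m}-\frac{6\pi ih}{m^2}+\frac{\pi i}{m}}.
\]
Inserting $h=\frac{m\mp1}{6}$ gives $\frac{\pi i h}{m}=\frac{\pi i}{6}\mp\frac{\pi i}{6m}$ and $-\frac{6\pi ih}{m^2}=-\frac{\pi i}{m}\pm\frac{\pi i}{m^2}$. The two terms $\pm\frac{\pi i}{m}$ cancel, leaving $e^{\frac{\pi i}{6}\mp\frac{\pi i}{6m}\pm\frac{\pi i}{m^2}}=\mathcal{C}_{m,h}$.

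The main obstacle is purely the bookkeeping of phases and of the $\pm$/$\mp$ conventions. In particular, the sign in Step 1 coming from the $-i$ prefactors and from the $\t\mapsto\t+h$ shift must be tracked carefully. I would double-check Step 1 against the first few terms of the $q$-expansion.
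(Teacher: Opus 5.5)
Your proposal is correct and is essentially the paper's proof. You reduce to $\l=1$ and to $h=\frac{m\mp1}{6}$ via $\mathcal{F}_m(\overline{q})=\overline{\mathcal{F}_m(q)}$, use \Cref{lem:thetaprops} (3) to write $\mathcal{F}_m$ at $q=e^{\frac{2\pi i}{m}(h+iz)}$ as $e^{\frac{\pi i h}{m}}$ times $\vartheta\left(\frac{2iz}{m}+\frac{2h}{m};iz\right)/\vartheta\left(\frac{iz}{m}+\frac hm;iz\right)$ (up to a factor $e^{-\frac{\pi z}{m}}\to 1$), and conclude with \Cref{newlem} (3); your bookkeeping for the exponent $b_1-3b_1^2=\frac1{12}-\frac1{12m^2}$ and the constant $e^{\frac{\pi i}{6}\mp\frac{\pi i}{6m}\pm\frac{\pi i}{m^2}}$ is right and simply makes explicit what the paper leaves to the reader.
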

\begin{proof}
	Again it is enough to consider the case $\l=1$.	Moreover, by definition, $\g^{[1]}_{m}(n)\in \mathbb{R}$. So, $\mathcal{F}_m(\overline{q})=\overline{\mathcal{F}_m(q)}$ and thus we only need to prove the claim for $h=\frac{m\mp 1}{6}$. 
	Using \Cref{lem:thetaprops} (3), for $h =\frac{m\mp 1}{6}$, we have, as $z\to 0$, 
	\begin{align*}
	\mathcal{F}_m\left(e^{\frac{2\pi i}{m}\left(\frac{m\mp 1}6+iz\right)}\right) \sim e^{\frac{\pi i}{6}} e^{\mp\frac{\pi i}{6m}}\frac{\vartheta\left(\frac 13\mp\frac{1}{3m}+\frac{2iz}m;iz\right)}{\vartheta\left(\frac 16\mp\frac{1}{6m}+\frac{iz}m;iz\right)}.\end{align*}
	Now, \Cref{newlem} (3) finishes the proof.\qedhere  
\end{proof}

\begin{proof}[Proof of \Cref{A1}]
 By \Cref{lem:thetaprops} (5), $q^{\frac{3\l}{2m}-\frac{\l}2}\mathcal{F}^{\l}_m(q)$ is a modular function. By \Cref{lem1a} its maximal growth occurs at the cusps $\frac{m\mp1}{6m}$ and $\frac{5m\pm 1}{6m}$. Using \Cref{lem2a} and \Cref{asymp}, \Cref{Thm:Zuckerman} now gives, as $n\to \infty$, 
	\[
	\g^{[\l]}_m(n)\sim \left(\frac{\left(1-\frac 1m\right)\left(1+\frac 1m\right)\l}{6m}\right)^{\frac 14}\frac{T^{[\l]}_m(n)e^{\pi\sqrt{\frac{2\left(1-\frac 1m\right)\left(1+\frac 1m\right)\l}{3m}n }}}{n^{\frac 34}}.
	\]
	Here we require that the right-hand side does not vanish, which we now prove. Precisely, for $n,\l\in \mathbb{N}$ and $m\in \mathbb{N}_{>3}$ with $\gcd(m,6)=1$, we need to show
	\[
	T^{[\l]}_m(n)\neq 0.
	\]
	The proof is analogous to the argument given in \Cref{A}.\qedhere 
\end{proof}

\begin{proof}[Proof of \Cref{FlipCor}]
	By \Cref{A1}, for $n\gg1$, 
	\begin{equation*}
	\operatorname{sgn}\left(\g^{[\l]}_m(n)\right)
	=\operatorname{sgn}\left(T_m^{[\l]}(n)\right).
	\end{equation*}
	Since $n\equiv3r+2mj\pmod{6m}$,
	we obtain, using that $m\equiv \pm 1 \pmod{6}$, 
	\begin{align}
	T_m^{[\l]}(n)
	&=
	(-1)^r
	\cos\left(\pi
	\left(\frac{\l}{6}
	\pm \frac{1}{m}
	\left(
	r-\frac{\l}{6}+\frac{\l}{m}\right)
	\right)
	\right).
	\label{eq:flip-reduction}
	\end{align}
	We write $\l=6s+a$ with $0\le a\le 5$. Since $m\equiv\pm 1\pmod6$, we have
	\[
	R=R_a:=a\frac{\pm m-1}{6}+r-s\in \mathbb{Z}.
	\]
	Noting that 
	\begin{align*}
	\frac{\l}{6}\pm \frac 1m\left(r-\frac{\l}{6}+\frac{\l}{m}\right)
	=s\pm \frac 1m\left(R+\frac{\l}{m}\right),
	\end{align*}
	and using \eqref{eq:flip-reduction}, we have,
	\begin{align}
	T_m^{[\l]}(n)
	&=
	(-1)^{r+s}
	\cos\left(
	\pi
	\left(
	\frac{R}{m}+\frac{\l}{m^2}
	\right)
	\right).
	\label{eq:master}
	\end{align}

	Since $0<\l<\frac{m}{2}$ by assumption, we have
	\begin{equation*}
	0<\frac{\l}{m^2}<\frac{1}{2m}.
	\end{equation*}
	Thus, it is not hard to see that 
	\begin{equation}\label{eq:signlemma}
	\operatorname{sgn}\left(
	\cos\left(
	\pi
	\left(
	\frac{R}{m}+\frac{\l}{m^2}
	\right)
	\right)
	\right)
	=
	(-1)^{
		\chi_{R\!\pmod{2m}\in
			\left[\frac{m+1}{2},\,\frac{3m-1}{2}\right]}
	}.
	\end{equation}
	
	We now apply \eqref{eq:master} and \eqref{eq:signlemma}, distinguishing residue
	classes of $\l$ modulo $6$.
	
	\medskip
	\noindent
	\textbf{Case 1: $\l\equiv0\pmod6$.} 
	Here $a=0$ and $s=\frac{\l}{6}$. Thus $R=r-s$. We have 
	\[
	R\pmod{2m}\!\in\!
	\left[\frac{m+1}{2},\frac{3m-1}{2}\right]\Leftrightarrow\! r\pmod{2m}\!\in\!\left[
	\frac{m+1}{2}+\frac{\l}{6},\,
	\frac{3m-1}{2}+\frac{\l}{6}
	\right].
	\]
	It is not hard to show that this interval lies in $[0,2m-1]$. Thus we can skip the condition $r\!\pmod{2m}$. By \eqref{eq:master}, we conclude the claim in this case.

	\medskip
	\noindent
	\textbf{Case 2: $\l\equiv1\pmod6$.}  As the proof of this case is analogous to the case $\l\equiv 0\pmod{6}$, we skip it here.

	\medskip
	\noindent
	\textbf{Case 3: $\l\equiv2\pmod6$.} Again the proof follows that of the case $\l\equiv 0\pmod{6}$ and we therefore skip it here.

	\medskip
	\noindent
	\textbf{Case 4: $\l\equiv3\pmod6$.}
	Here $a=3$ and $s=\frac{\l-3}{6}$. Then $R=\frac{\pm m-1}{2}+r-s$. We have
	\begin{align*}
	&R\!\pmod{2m}\in
	\left[\frac{m+1}{2},\frac{3m-1}{2}\right]
	\Leftrightarrow r\!\pmod{2m}\!\in \left[\frac{m+1}{2}-\frac{\pm m-1}{2}+s, \frac{3m-1}{2}-\frac{\pm m-1}{2}+s\right].
	\end{align*}
	
	We first assume that $m \equiv 1 \pmod{6}$. Then our condition becomes
	\begin{align*}
	r\pmod{2m}\in
	\left[\frac{\l-3}{6}+1,m+\frac{\l-3}{6}\right].
	\end{align*}
	It is not hard to see that this again lies in $[0,2m-1]$. The claim follows in this case as $(\frac m3)=1$.

	Next, assume that $m\equiv -1\pmod 6$. Then our condition becomes
	\begin{align*}
	r\pmod{2m}
	&\in \left[m+s+1,2m+s\right]=\left[m+s+1,2m-1\right]\cup \left[2m,2m+s\right]\\
	&\hspace{0.4 cm}\equiv \left[m+s+1,2m-1\right]\cup\left[0,s\right]\pmod{2m},
	\end{align*}
	which is the complement of $[s+1,m+s]$ in $[0,2m-1]$. Thus
	\[
	(-1)^{\chi_{r\in \left[m+s+1,2m-1\right]\cup \left[0,s\right]}}=(-1)^{1+\chi_{r\in \left[s+1,m+s\right]}}.
	\]
	Noting that $(\frac m3)=-1$, the claim follows in this case.
	
	\medskip
	\noindent
	\textbf{Case 5: $\l\equiv4\pmod6$.}
	Here $a=4$ and $s=\frac{\l-4}{6}$. Then $R=\frac{2(\pm m-1)}{3}+r-s$. Thus we have 
	\begin{align*}
	&R\pmod{2m}\in
	\left[\frac{m+1}{2},\frac{3m-1}{2}\right]\\
	&\hspace{3 cm}
	\Leftrightarrow r\pmod{2m}\in \left[\frac{m+1}{2}-\frac{2\left(\pm m-1\right)}{3}+s, \frac{3m-1}{2}-\frac{2\left(\pm m-1\right)}{3}+s\right].
	\end{align*}
	If $m\equiv 1 \pmod{6}$, the above condition becomes 
	\begin{align*}
	r\pmod{2m}
	\in\left[-\frac m6+\frac{\l-4}{6}+\frac 76,\frac{5m}{6}+\frac{\l-4}{6}+\frac 16\right].
	\end{align*}
	Noting that the left endpoint is negative, we split the interval as 
	\begin{align*}
	&\left[-\frac m6+\frac{\l-4}{6}+\frac 76,-1\right]\cup \left[0,\frac{5m + 1}{6}+\frac{\l-4}{6}\right]\\
	&\hspace{5 cm}
	 \equiv \left[0,\frac{5m+1}{6}+\frac{\l-4}{6}\right]\cup \left[\frac{11m+7}{6}+\frac{\l-4}{6},2m-1\right]\pmod{2m}.
		\end{align*}
	This is the complement of $[\frac{5m+7}{6}+\frac{\l-4}{6}, \frac{11m+1}{6}+\frac{\l-4}{6}]$ in $[0,2m-1]$.
	The claim follows in this case.
	
	We next consider the case $m\equiv -1\pmod{6}$. Then our condition becomes 
	\begin{align}\label{l3}
	r\pmod{2m}
	\in\left[\frac{7m+7}{6}+\frac{\l-4}{6},\frac{13m+1}{6}+\frac{\l-4}{6}\right].
	\end{align}
	Note that $\frac{7m+7}{6}+\frac{\l-4}{6}<2m$ and $\frac{13m+1}{6}+\frac{\l-4}{6}>2m$. Thus we write the interval in \eqref{l3} as 
	\begin{align*}
	&\left[\frac{7m+7}{6}+\frac{\l-4}{6},2m-1\right]\cup \left[2m,\frac{13m+1}{6}+\frac{\l-4}{6}\right]\\
	&\hspace{5 cm}\equiv \left[0,\frac{m+1}{6}+\frac{\l-4}{6}\right]\cup \left[\frac{7m+7}{6}+\frac{\l-4}{6},2m-1\right]\pmod{2m}.
	\end{align*}
	This is the complement of $[\frac{m+7}{6}+\frac{\l-4}{6},\frac{7m+1}{6}+\frac{\l-4}{6}]$ in $[0,2m-1]$, yielding the claim in this case.
	
	\medskip
	\noindent
	\textbf{Case 6: $\l\equiv5\pmod6$.}
	Here $a=5$ and $s=\frac{\l-5}{6}$. Then $R=\frac{5(\pm m-1)}{6}+r-s$. We have
	\begin{align*}
	&R\pmod{2m}\in
	\left[\frac{m+1}{2},\frac{3m-1}{2}\right]\\
	&\hspace{4 cm}
	\Leftrightarrow r\pmod{2m}\in \left[\frac{m+1}{2}-\frac{5\left(\pm m-1\right)}{6}+s, \frac{3m-1}{2}-\frac{5\left(\pm m-1\right)}{6}+s\right].
	\end{align*}
	
	We first assume that $m\equiv 1 \pmod{6}$. Then our condition becomes 
	\begin{align*}
	r\pmod{2m}
	\in\left[\frac{4-m}{3}+\frac{\l-5}{6}, \frac{2m+1}{3}+\frac{\l-5}{6}\right].
	\end{align*}
	Noting that $\frac{4-m}{3}+\frac{\l-5}{6}<0$, we rewrite the above interval as 
	\begin{align*}
	&\left[\frac{4-m}{3}+\frac{\l-5}{6}, -1\right]\cup \left[0, \frac{2m+1}{3}+\frac{\l-5}{6}\right]\\
	&\hspace{5 cm}\equiv \left[0,\frac{2m+1}{3}+\frac{\l-5}{6}\right]\cup\left[\frac{5m+4}{3}+\frac{\l-5}{6},2m-1\right]\pmod{2m}.
	\end{align*}
	This is the complement of $[\frac{2m+4}{3}+\frac{\l-5}{6},\frac{5m+1}{3}+\frac{\l-5}{6}]$ in $[0,2m-1]$ and this claim follows in this case.
	
	We next assume $m\equiv -1\pmod{6}$. In this case, our condition becomes 
	\begin{align*}
	r\pmod{2m}
	\in \left[\frac{4m+4}{3}+\frac{\l-5}{6}, \frac{7m+1}{3}+\frac{\l-5}{6}\right].
	\end{align*}
	Noting that $\frac{4m+4}{3}+\frac{\l-5}{6}<2m$ and $\frac{7m+1}{3}+\frac{\l-5}{6}>2m$, we rewrite the above interval as 
	\begin{align*}
	&\left[\frac{4m+4}{3}+\frac{\l-5}{6}, 2m-1\right]\cup \left[2m, \frac{7m+1}{3}+\frac{\l-5}{6}\right]\\
	&\hspace{5 cm}\equiv \left[0,\frac{m+1}{3}+\frac{\l-5}{6}\right]\cup \left[\frac{4m+4}{3}+\frac{\l-5}{6}, 2m-1\right]\pmod{2m}.
	\end{align*}
	This is the complement of $[\frac{m+4}{3}+\frac{\l-5}{6},\frac{4m+1}{3}+\frac{\l-5}{6}]$ in $[0,2m-1]$ and the claim follows in this case.\qedhere 
\end{proof}

\section{Open questions}\label{sec5}

The results of this paper suggest several directions for future research.
\begin{enumerate}
	\item It would be interesting to extend Theorems \ref{A}, \ref{mainthmeven}, and \ref{A1}, by considering theta quotients of the form (for $a,b,m,\ell\in \mathbb{N}$ with $a\neq b$)
	\[
	\frac{\vartheta\left(a\t;m\t\right)^{\ell}}{\vartheta\left(b\t;m\t\right)^{\ell}}.
	\]
	For which choices of $a,b,m$, and $\ell$ do the Fourier coefficients have an eventual periodic sign pattern? Can this sign pattern be determined explicitly? More generally, can one characterize the dominant cusps in terms of $a,b,$ and $m$?
	\item In Theorem \ref{A1}, we assume $\gcd(m,6)=1$. Can this assumption be removed? More precisely, how do the dominant cusps and the resulting sign pattern change if $2\mid m$ or $3\mid m$?
	\item Our results raise a more general question: under what conditions do the Fourier coefficients of a modular theta quotient have an eventually periodic sign pattern?
\end{enumerate}

\section*{Declarations}
\subsection*{Data Availability}
Data sharing is not applicable to this article as no datasets were generated or analyzed during the current study.
\subsection*{Conflicts of interest} The authors declare that there are no conflicts of interest. 
\subsection*{Funding} Funding declarations can be found in the Acknowledgements Section.

\appendix
\section{Numerics}

In the following tables, we list the sign patterns of $c^{[\ell]}_m(n)$ and $\g^{[\ell]}_m(n)$ for $m\in \{5,7\}$, $1\le \ell \le 6$, and $70\le n\le m+69$. 
\begin{table}[H]
	\centering
	\renewcommand{\arraystretch}{1.7}
	\setlength{\tabcolsep}{4.2pt}

	\begin{tabular}{|@{\hspace{11pt}}c@{\hspace{11pt}}|c|c|c|c|c|}
		\hline
		$(m,\ell)$ & $(5,1)$ & $(5,2)$ & $(5,3)$ & $(5,4)$ & $(5,5)$\\
		\hline
		\rule[-2.1ex]{0pt}{5.2ex}$\textnormal{sgn}\left(c^{[\ell]}_m(n)\right)$ & $+-+--$ & $+-+--$ & $+-+-+$ & $+-+-+$ & $--+-+$\\
		\hline
		\rule[-2.1ex]{0pt}{5.2ex}$\textnormal{sgn}\left(\gamma^{[\ell]}_m(n)\right)$ & $++---$ & $+++--$ & $+++--$ & $-++--$ & $-+++-$\\
		\hline
	\end{tabular}\\[8.5pt] 
	
	\begin{tabular}{|@{\hspace{11pt}}c@{\hspace{11pt}}|c|c|c|c|}
		\hline
		$(m,\ell)$ & $(5,6)$ & $(7,1)$ & $(7,2)$ & $(7,3)$ \\
		\hline
		\rule[-2.1ex]{0pt}{5.2ex}$\textnormal{sgn}\left(c^{[\ell]}_m(n)\right)$ & $-++-+$ & $+-+-++-$ & $+-+-+-+$ & $--+-+-+$\\
		\hline
		\rule[-2.1ex]{0pt}{5.2ex}$\textnormal{sgn}\left(\gamma^{[\ell]}_m(n)\right)$ & $--++-$ & $+++---+$ & $+++----$ & $++++---$\\
		\hline
	\end{tabular}\\[8pt] 
	
	\begin{tabular}{|@{\hspace{11pt}}c@{\hspace{11pt}}|c|c|c|}
		\hline
		$(m,\ell)$ & $(7,4)$ & $(7,5)$ & $(7,6)$ \\
		\hline
		\rule[-2.1ex]{0pt}{5.2ex}$\textnormal{sgn}\left(c^{[\ell]}_m(n)\right)$  & $-+--+-+$ & $-+-++-+$ & $-+-+-++$\\
		\hline
		\rule[-2.1ex]{0pt}{5.2ex}$\textnormal{sgn}\left(\gamma^{[\ell]}_m(n)\right)$ & $-++++--$ & $--+++--$ & $--++++-$\\
		\hline
	\end{tabular}
	
	\vspace{0.2cm}
	\caption{Sign patterns for $c_m^{[\ell]}(n)$ and $\gamma_m^{[\ell]}(n)$.}
	\label{tab:horizontal_table}
\end{table}

\end{document}